\newtheorem{theorem}{Theorem}[section]
\newtheorem{lemma}[theorem]{Lemma}
\newtheorem{proposition}[theorem]{Proposition}
\newtheorem{corollary}[theorem]{Corollary}
\newtheorem{remar}[theorem]{Remark}
\theoremstyle{definition}
\newtheorem{prob}[theorem]{Open Problem}
\newtheorem{Theorem}{Theorem}[section]
\newtheorem{Lemma}[Theorem]{Lemma}
\newtheorem{Corollary}[Theorem]{Corollary}
\newtheorem{Remark}[Theorem]{Remark}
\newcommand{\R}{\mathbb R}
\newcommand{\N}{\mathbb N}
\newcommand{\Z}{\mathbb Z}
\begin{document}

\title{Epsilon multiplicity is a limit of Amao multiplicities}
\author{Steven Dale Cutkosky }
\thanks{The first author was partially supported by NSF}

\address{Steven Dale Cutkosky, Department of Mathematics,
University of Missouri, Columbia, MO 65211, USA}
\email{cutkoskys@missouri.edu}

\author{Stephen Landsittel }

\address{Stephen Landsittel, Department of Mathematics,
University of Missouri, Columbia, MO 65211, USA}
\email{sdlg6f@missouri.edu}

\subjclass{13H15}

\begin{abstract} In this paper we prove a volume = multiplicity type theorem for epsilon multiplicity. We show that epsilon multiplicity is a limit of Amao multiplicities.
\end{abstract}

\dedicatory{Dedicated to Sudhir Ghorpade  on the occasion of his sixtieth birthday}

\maketitle

\section{Introduction} Let $R$ be a $d$-dimensional Noetherian local ring with maximal ideal $m_R$ and $I\subset R$ be an ideal. The epsilon multiplicity of $I$ is defined in \cite{UV} as 
$$
\epsilon(I)=
\limsup_{n\rightarrow\infty} \frac{\ell_R((I^n)^{\rm sat}/I^n)}{n^d/d!}.
$$
The saturation $J^{\rm sat}$ of an ideal $J$ of $R$ is 
$$
J^{\rm sat}=J:m_R^{\infty}=\cup_{i=0}^{\infty}J:m_R^i
$$
where $m_R$ is the maximal ideal of $R$.
It is shown in Corollary 6.3 \cite{C2} that if $R$ is analytically unramified, then the epsilon multiplicity of an ideal in $R$ exists as a limit; that is, 
$$
\epsilon(I)=
\lim_{n\rightarrow\infty} \frac{\ell_R((I^n)^{\rm sat}/I^n)}{n^d/d!}.
$$
However, the epsilon multiplicity can be an irrational number, as shown in \cite{CHST}.

Some recent papers on epsilon multiplicity are \cite{sD}, \cite{CS}, \cite{DDRV} and \cite{DRT}.

In this paper, we prove that a generalization of  the volume equals multiplicity formula of ordinary multiplicity holds for epsilon multiplicity. The volume = multiplicity formula of ordinary multiplicity is  proven in increasing generality in \cite{ELS}, \cite{M1}, \cite{LM} and  Theorem 6.5 \cite{C2}. 

Before stating our theorem, we begin by recalling a few facts about   Amao multiplicity, which is defined and developed in \cite{Am}, \cite{R} and on page 332 of \cite{HS}. Suppose that $R$ is a $d$-dimensional Noetherian local ring and $J\subset I$ are ideals in $R$ such that $I/J$ has finite length. Then $I^n/J^n$ has finite length for all $n\ge 0$ and there exists a numerical polynomial $p(n)$ such that for $n$ sufficiently large, $p(n)=\ell_R(I^n/J^n)$. The Amao multiplicity $a(J,I)$ is defined to be $d!$ times the coefficient of the degree $d$ term of $p(n)$. That is,
$$
a(J,I)=\lim_{n\rightarrow\infty}\frac{\ell_R(I^n/J^n)}{n^d/d!}.
$$
The Amao multiplicity is always a nonnegative integer.

In Theorem \ref{TheoremA}, we prove that the epsilon multiplicity of an ideal in an analytically unramified $d$-dimensional local ring is a limit of Amao multiplicities.

\begin{theorem}\label{TheoremA} Let $R$ be an analytically unramified $d$-dimensional local ring and $I\subset R$ be an ideal. Then
$$
\epsilon(I)=
\lim_{m\rightarrow\infty}\frac{a(I^m,(I^m)^{\rm sat})}{m^d}
$$
\end{theorem}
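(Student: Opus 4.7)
Write $J_n := (I^n)^{\rm sat}$. Since $J_m/I^m$ is finitely generated and annihilated by a power of $m_R$, $\ell_R(J_m/I^m)<\infty$, so the Amao multiplicity $a(I^m,J_m)$ is defined, and the theorem amounts to
$$\epsilon(I)=\lim_{m\to\infty}\lim_{n\to\infty}\frac{\ell_R(J_m^n/I^{mn})}{(mn)^d/d!}.$$

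For the easy inequality (namely $a(I^m,J_m)/m^d\le\epsilon(I)$ for every $m$, hence a $\limsup$ bound), observe that $\{J_n\}_{n\ge 0}$ is a graded family: if $m_R^i a\subseteq I^p$ and $m_R^j b\subseteq I^q$, then $m_R^{i+j}ab\subseteq I^{p+q}$, so $J_pJ_q\subseteq J_{p+q}$, and in particular $J_m^n\subseteq J_{mn}$. Hence $\ell_R(J_m^n/I^{mn})\le\ell_R(J_{mn}/I^{mn})$; dividing by $(mn)^d/d!$ and letting $n\to\infty$ along the subsequence $N=mn$ (which computes $\epsilon(I)$ because the latter exists as a genuine limit by Corollary 6.3 of \cite{C2}) yields the claim.

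The substantive direction is $\liminf_{m\to\infty}a(I^m,J_m)/m^d\ge\epsilon(I)$. My plan is to invoke the semigroup / Okounkov body framework of \cite{C2}. After reducing to the case that $R$ is a complete local domain (using the analytically unramified hypothesis to pass to the $m_R$-adic completion and then quotient by a minimal prime), fix a rank-$d$ valuation $\nu$ dominating $R$ with value group $\Z^d$, and attach to each graded family $\{H_n\}$ the semigroup $\Gamma(H_\bullet):=\{(\nu(f),n):0\ne f\in H_n,\ n\ge 1\}\subseteq\Z^d\times\Z_{>0}$. The volume theorem of \cite{C2} expresses $\epsilon(I)$ as the volume of the difference of the Okounkov bodies of $\Gamma(J_\bullet)$ and $\Gamma(I^\bullet)$. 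For fixed $m$, the family $\{J_m^n\}_n$ is the Rees family of the single finitely generated ideal $J_m$, so its semigroup is generated in degree one; the classical volume $=$ multiplicity identity (cf. \cite{LM}) then identifies $a(I^m,J_m)/m^d$ with the volume of the corresponding approximating body, mapped into $\Gamma(J_\bullet)$ via $(v,n)\mapsto(v,mn)$ and rescaled by $1/m$.

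The main obstacle is the approximation step: showing that these rescaled approximating bodies exhaust the Okounkov body of $\Gamma(J_\bullet)$ as $m\to\infty$. Concretely, one must show that for each $N$ and every nonzero $f\in J_N$ a suitable power $f^k$ lies in $J_m^{kN/m}$ for $m$ in a cofinal set (or at least that the points omitted contribute negligible Okounkov-volume). This is the saturated analogue of the approximation step of Lazarsfeld-Mustata \cite{LM} and Theorem 6.5 of \cite{C2}; finite generation of the normalization of $R[It]$ (Rees's theorem, applicable because $R$ is analytically unramified) should provide the uniformity that makes the approximation work. Monotone convergence of volumes will then combine with the easy direction to yield the desired equality.
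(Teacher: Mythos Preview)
Your easy inequality and the overall Okounkov-body framework match the paper's approach. The hard direction, however, has two genuine gaps.

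First, Rees's theorem is the wrong uniformity input: it governs integral closures $\overline{I^n}$, which have nothing to do with saturations $(I^n)^{\rm sat}$. (The analytic-unramifiedness hypothesis is used elsewhere, namely so that $\hat R$ is reduced and one can decompose over minimal primes.) What the paper actually uses is Swanson's theorem on primary decompositions of powers \cite{S1}: there is a single $c$ with $m_R^{cn}$ contained in the $m_R$-primary component of $I^n$ for every $n$, hence $I^n\cap m_R^{cn}=(I^n)^{\rm sat}\cap m_R^{cn}$, and sandwiching via $I^{mk}\subset[(I^m)^{\rm sat}]^k\subset(I^{mk})^{\rm sat}$ gives $I^{mk}\cap m_R^{cmk}=[(I^m)^{\rm sat}]^k\cap m_R^{cmk}$ for all $m,k$ (Lemma~\ref{EpsTheorem4}). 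This uniform truncation identity is the engine of the whole argument: it is what allows each length $\ell_R(J(m)_k/I(m)_k)$ to be written as a difference of two truncated lengths, each of which is then an Okounkov-body volume.

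Second, your approximation step is mis-posed. You ask that powers of $f\in J_N$ eventually lie in $J_m^{kN/m}$, but $(I^N)^{\rm sat}$ is in general not contained in $[(I^m)^{\rm sat}]^{N/m}$ (only the reverse inclusion holds, Lemma~\ref{EpsTheorem3}), so no such element-level statement is available. The paper never approximates $\Delta(\Gamma(J_\bullet))$ in isolation this way. Instead, after the Swanson splitting, it applies the Lazarsfeld--Musta\c{t}\u{a} approximation (Theorem~\ref{ConeTheorem2}) separately to the two semigroup sandwiches $I_m^k\subset I(m)_k\subset I_{mk}$ and $J_m^k\subset J(m)_k\subset J_{mk}$; the substantive technical work (Section~\ref{SecGF}) is arranging a single truncation parameter $\beta$ that works uniformly for $\mathcal I$, $\mathcal J$ and all $\mathcal I(m)$, $\mathcal J(m)$ simultaneously. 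Finally, ``quotient by a minimal prime'' does not preserve the lengths in question; the actual reduction to the domain case is a telescoping decomposition over all minimal primes of $\hat R$, following the pattern of \cite{C2}.
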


The proof of Theorem \ref{TheoremA} is in Section \ref{SecEM}.
Theorem \ref{TheoremA} is a consequence of Theorem \ref{MultFormula},
which is proven in Section \ref{SecMT}.

We obtain the following corollary, since with the  assumptions of the corollary, $P^{(n)}=(P^n)^{\rm sat}$ for all $n$.

\begin{Corollary}\label{corollaryB} Let $R$ be a $d$-dimensional regular local ring and $P\subset R$ be a prime ideal such that $R/P$ has an isolated singularity. Then 
$$
\lim_{n\rightarrow\infty}\frac{\ell_R(P^{(n)}/P^n)}{n^d/d!}=\epsilon(P)=
\lim_{m\rightarrow\infty}\frac{a(P^{m},P^{(m)})}{m^d}
$$
\end{Corollary}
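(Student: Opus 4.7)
The plan is to reduce the statement directly to Theorem \ref{TheoremA} by verifying, as the text preceding the Corollary suggests, that $P^{(n)}=(P^n)^{\rm sat}$ for every $n\geq 1$ under the given hypotheses. Once this identity is established, both equalities in the Corollary follow: a regular local ring is analytically unramified (its $m_R$-adic completion is again a regular local domain), so Corollary 6.3 of \cite{C2} ensures that the limit defining $\epsilon(P)$ exists, and substituting $P^{(n)}$ for $(P^n)^{\rm sat}$ gives the leftmost equality; the rightmost equality is then obtained by substituting $(P^m)^{\rm sat}=P^{(m)}$ into Theorem \ref{TheoremA} applied to $I=P$.

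The heart of the proof is therefore the claim $P^{(n)}=(P^n)^{\rm sat}$, which I would establish by showing $\mathrm{Ass}_R(R/P^n)\subseteq\{P,m_R\}$. Granting this, $P^n$ has a primary decomposition of the form $P^n=P^{(n)}\cap J_n$ with $J_n$ either equal to $R$ or $m_R$-primary; since saturation against $m_R$ removes exactly the $m_R$-primary component, we conclude $(P^n)^{\rm sat}=P^{(n)}$. To bound the associated primes, let $Q\in\mathrm{Ass}_R(R/P^n)$ with $Q\neq m_R$; then $Q\supseteq P$ because $Q\supseteq P^n$ is prime. The isolated-singularity hypothesis forces $R_Q/PR_Q=(R/P)_{Q/P}$ to be a regular local ring, and since $R_Q$ itself is regular, $PR_Q$ is generated by part of a regular system of parameters of $R_Q$, and in particular by an $R_Q$-regular sequence. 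Because $(PR_Q)^k/(PR_Q)^{k+1}$ is then free as a module over the domain $R_Q/PR_Q$, a short induction along the exact sequences $0\to (PR_Q)^k/(PR_Q)^{k+1}\to R_Q/(PR_Q)^{k+1}\to R_Q/(PR_Q)^k\to 0$ yields $\mathrm{Ass}_{R_Q}(R_Q/(PR_Q)^n)=\{PR_Q\}$. Localizing gives $QR_Q=PR_Q$, hence $Q=P$, as required.

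The main point of genuine content is the associated-prime computation above; once the isolated-singularity hypothesis is converted into the regular-sequence statement and the standard calculation of associated primes of powers of regular-sequence ideals is invoked, the remainder of the Corollary is an immediate citation of Theorem \ref{TheoremA} together with the existence of the epsilon limit in the analytically unramified case.
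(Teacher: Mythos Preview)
Your proposal is correct and follows exactly the route the paper takes: the paper's entire argument is the single sentence preceding the Corollary, namely that under the hypotheses $P^{(n)}=(P^n)^{\rm sat}$ for all $n$, after which the result is an immediate application of Theorem~\ref{TheoremA}. You have simply supplied the details behind that assertion (the associated-prime computation via the isolated-singularity hypothesis), which the paper leaves to the reader.
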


A graded family of ideals $\mathcal I=\{I_n\}$ in a local ring $R$ is a family of ideals indexed by the natural numbers such that $I_0=R$ and $I_mI_n\subset I_{m+n}$ for all $m,n\in \N$. The family $\mathcal I$ is called a graded family of $m_R$-primary ideals if in addition, $I_n$ is $m_R$-primary for $n>0$.

Multiplicities of graded families of $m_R$-primary ideals are computed in increasing generality in \cite{ELS}, \cite{M1}, \cite{LM}, \cite{C1} and \cite{C2}. The last three of these papers use the method of volumes of Okounkov bodies to make this computation. The method of volumes of Okounkov bodies was initially introduced to compute volumes of graded linear systems in \cite{Ok}, \cite{KK} and \cite{LM}. 

Theorem \ref{TheoremA} follows from the more general Theorem \ref{MultFormula}. The existence of the limit
$$
F(\mathcal I,\mathcal J)=\lim_{n\rightarrow\infty}\frac{\ell_R(J_n/I_n)}{n^d/d!}
$$
in Theorem \ref{MultFormula} follows from Theorem 6.1 \cite{C2}.
The proof of Theorem \ref{MultFormula}  makes a reduction to the case where $R$ is analytically irreducible. This reduction argument is a generalization of the one in the proof of Theorem 6.1 \cite{C2}.

 To compute the epsilon multiplicity of an ideal $I$ in an analytically irreducible ring,  we must compute associated semigroups to $I$ and $I^{\rm sat}$, truncated at some value $\beta$ sufficiently large that $I$ and $I^{\rm sat}$ agree at their elements of larger value, and also so that $\epsilon(I)$ can be computed by taking a difference of the associated volumes of the Okounkov bodies associated to their semigroups, truncated at $\beta$.

Theorem \ref{MultFormula} involves an infinite number of graded families of ideals, $\mathcal I$ and $\mathcal J$, and for all $m\ge 1$, $\mathcal I(m)$ and $\mathcal J(m)$.
A subtlety of the proof of Theorem \ref{MultFormula} (in the case that $R$ is analytically irreducible) is that we must find a $\beta$ such that the truncation at $\beta$ works for
 the graded systems of ideals $\mathcal I$ and $\mathcal J$,  and $m\beta$ works for 
 the filtrations $\mathcal I(m)$ and $\mathcal J(m)$ for all $m\in \N$. The theoretical realization of this in the theory  of Okounkov bodies is achieved in Theorem \ref{TheoremB1} and Theorem \ref{TheoremB2} of Section \ref{SecGF}.
 
 We thank Antoni Rangachev for raising the interesting question of  if Theorem \ref{TheoremA} is true.

\section{Semigroups and Cones}

Suppose that $S\subset \N^{d+1}$ is a semigroup. Let $\Sigma=\Sigma(S)$ be the closed convex cone in $\R^{d+1}$ which is the closure of the set of all linear combinations $\sum\lambda_is_i$ with $\lambda_i\in \R_{\ge 0}$ and $s_i\in S$.
Set 
$$
\Delta=\Delta(S)=\Sigma\cap(\R^d\times\{1\}).
$$
For $m\in \N$, put
$$
S_m=S\cap (\N^d\times\{m\}).
$$
which can be viewed as a subset of $\N^d$. Let $G(S)$ be the subgroup of $\Z^{d+1}$ generated by $S$.
\begin{theorem}\label{TheoremL}(Okounkov,  Section 3 \cite{Ok}, Lazarsfeld and Musta\c{t}\u{a}, Proposition 2.1 \cite{LM}, Kaveh and Khovanskii \cite{KK}, Theorem 3.2 [C2]) Suppose that a subsemigroup $S$ of  $\Z^d\times\N$  satisfies the following two conditions:
\begin{equation}\label{Cone2}
\begin{array}{l}
\mbox{There exist finitely many vectors $(v_i,1)$ spanning a semigroup $B\subset\N^{d+1}$}\\
\mbox{such that $S\subset B$}
\end{array}
\end{equation}
and
\begin{equation}\label{Cone3}
G(S)=\Z^{d+1}.
\end{equation}
Then
$$
\lim_{n\rightarrow\infty} \frac{\# S_n}{n^d}={\rm vol}(\Delta(S)).
$$
\end{theorem}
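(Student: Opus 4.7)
The plan is to sandwich $\#S_n$ asymptotically between upper and lower bounds both equal to $n^d\,{\rm vol}(\Delta(S))$, introducing an auxiliary error parameter that is later sent to zero. The upper bound is immediate: hypothesis (\ref{Cone2}) guarantees that $\Delta(S) \subseteq \Delta(B)$ is a bounded, hence compact, convex subset of $\R^d$, and by construction of $\Sigma$ we have $S_n \subseteq n\Delta(S) \cap \Z^d$. Classical lattice-point counting for compact convex bodies then yields $\#(n\Delta(S) \cap \Z^d) = n^d\,{\rm vol}(\Delta(S)) + O(n^{d-1})$, so $\limsup_n \#S_n/n^d \leq {\rm vol}(\Delta(S))$.

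The substance of the theorem is the matching lower bound. Fix $\epsilon > 0$. First I would construct a finitely generated subsemigroup $S' \subseteq S$ with ${\rm vol}(\Delta(S')) \geq {\rm vol}(\Delta(S)) - \epsilon$ and $G(S') = \Z^{d+1}$. The volume condition is achieved by choosing finitely many $s_1,\dots,s_k \in S$ whose images in $\R^d \times \{1\}$ have convex hull approximating $\Delta(S)$ in volume, using the definition of $\Sigma$ as a closure of cones generated by finite subsets of $S$ together with inner regularity of Lebesgue measure; the group condition is then arranged by adjoining finitely many further elements of $S$ whose differences generate $\Z^{d+1}$, which exist by (\ref{Cone3}) and which only enlarge $\Delta(S')$. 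Next I would invoke the classical Khovanskii semigroup theorem: for a finitely generated $S' \subseteq \N^{d+1}$ with $G(S') = \Z^{d+1}$, there is a finite set $F \subset \Sigma(S') \cap \Z^{d+1}$ such that every lattice point in the relative interior of $\Sigma(S')$ outside $F$ belongs to $S'$. Applying this to lattice points in $nK$, where $K$ is a compact subset of the interior of $\Delta(S')$ with ${\rm vol}(K) \geq {\rm vol}(\Delta(S')) - \epsilon$, gives $\#S'_n \geq n^d\,{\rm vol}(K) - O(n^{d-1})$. Since $S' \subseteq S$, one concludes $\liminf_n \#S_n/n^d \geq {\rm vol}(\Delta(S)) - 2\epsilon$, and letting $\epsilon \to 0$ closes the sandwich.

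The main obstacle is Khovanskii's lemma itself, which is the technical engine behind the lower bound. One standard proof proceeds by induction on $d$: choose a generator of $S'$ whose last coordinate is $1$, project modulo the line it spans, and reduce to a problem one dimension lower, with base case the one-dimensional Frobenius coin fact that $\N$-combinations of a finite coprime set of positive integers realize every sufficiently large integer. The delicate point is maintaining uniformity — the defect set $F$ must be finite globally, not merely finite slice by slice — which is exactly what forces the hypothesis $G(S') = \Z^{d+1}$ and drives the dimension-reducing step. Once this lemma is in hand, the sandwich argument above assembles the conclusion with no further serious work.
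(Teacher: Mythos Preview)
Your sketch is correct and follows the standard argument due to Okounkov and Lazarsfeld--Musta\c{t}\u{a}: bound $\#S_n$ above by lattice points in $n\Delta(S)$, and bound it below by passing to a finitely generated subsemigroup with full group and nearly full volume, then invoking Khovanskii's lemma that such a semigroup eventually contains every lattice point in the interior of its cone. The paper, however, does not prove this theorem at all; it is quoted as a known result from \cite{Ok}, \cite{LM}, \cite{KK}, and \cite{C2}, and is used as a black box in the later sections. So there is no proof in the paper to compare against, and your outline is essentially a reconstruction of the argument in those references.

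Two minor remarks on the write-up. First, the $O(n^{d-1})$ error in the upper bound is stronger than you need and not entirely trivial for arbitrary convex bodies; it suffices to use the elementary fact that $\#(nK\cap\Z^d)/n^d\to{\rm vol}(K)$ for any bounded convex $K$, since the boundary of a convex set has Lebesgue measure zero. Second, in your inductive sketch of Khovanskii's lemma you assume a generator of $S'$ with last coordinate equal to $1$; this need not exist, and the actual reduction is slightly more delicate (one works with the sublattice generated by the last coordinates, or translates by a fixed element to reach a region where the semigroup is saturated). Neither point affects the overall correctness of your strategy.
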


\begin{theorem}\label{ConeTheorem2}(Proposition 3.1 \cite{LM}) Suppose that $S$ satisfies (\ref{Cone2}) and (\ref{Cone3}). Fix $\epsilon>0$. Then there is an integer $p_0= p_0(\epsilon)$
such that if $p\ge p_0$, then the limit
$$
\lim_{k\rightarrow \infty} \frac{\#(k S_p)}{k^dp^d}\ge {\rm vol}(\Delta(S))-\epsilon
$$
exists, where
$$
k S_p=\{x_1+\cdots+x_k\mid x_1,\ldots,x_k\in S_p\}.
$$
\end{theorem}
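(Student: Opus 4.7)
The plan is to apply Theorem \ref{TheoremL} to a suitable subsemigroup built from the single slice $S_p$, then compare the resulting volume to $\operatorname{vol}(\Delta(S))$. For each $p$ with $S_p\neq\emptyset$ I set
$$
S^{(p)} \;:=\; \bigsqcup_{k\ge 0}\bigl(kS_p\times\{k\}\bigr)\;\subset\;\N^d\times\N,
$$
which is a subsemigroup because $kS_p+\ell S_p\subset(k+\ell)S_p$. Condition (\ref{Cone2}) for $S$ forces $S_p$ to be finite, so $S^{(p)}$ is contained in the semigroup generated by $\{(x,1):x\in S_p\}$, giving (\ref{Cone2}) for $S^{(p)}$. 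Unwinding the definitions gives $\Delta(S^{(p)})=\operatorname{conv}(S_p)\subset\R^d$, and hence
$$
\operatorname{vol}(\Delta(S^{(p)})) \;=\; p^d\operatorname{vol}(K_p),\qquad K_p:=\operatorname{conv}\!\bigl(\tfrac{1}{p}S_p\bigr)\;\subset\;\Delta(S).
$$

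The delicate part is verifying (\ref{Cone3}) for $S^{(p)}$. Let $G_0^{(p)}:=\langle S_p-S_p\rangle\subset\Z^d$; then $G(S^{(p)})=\Z^{d+1}$ iff $G_0^{(p)}=\Z^d$. Given $v\in\Z^d=G(S)\cap(\Z^d\times\{0\})$, write $v=\sum n_is_i$ with $s_i\in S$, $n_i\in\Z$, and split into positive and negative parts to obtain $v=P-N$ with $P,N\in S$ sharing the same last coordinate $p$ (forced by $v$ having zero last coordinate), so $v\in G_0^{(p)}$; hence $\bigcup_pG_0^{(p)}=\Z^d$. The identity $(x-y)+(u-v)=(x+u)-(y+v)$ gives $G_0^{(p)}+G_0^{(q)}\subset G_0^{(p+q)}$, so finite generation of $\Z^d$ yields some $G_0^{(P)}=\Z^d$. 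Since translating by any fixed $z\in S_q$ shows $G_0^{(P)}\subset G_0^{(P+q)}$ when $S_q\neq\emptyset$, and $G(S)=\Z^{d+1}$ forces $\gcd\{q:S_q\neq\emptyset\}=1$, a Sylvester--Frobenius argument promotes this to $G_0^{(p)}=\Z^d$ for all $p\ge p_0$.

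Applying Theorem \ref{TheoremL} to $S^{(p)}$ for $p\ge p_0$ then gives the existence of the limit, equal to $\operatorname{vol}(K_p)$. For the lower bound, fix $\epsilon>0$. Rational points $y/q$ with $y\in S_q$ are dense in $\Delta(S)$ (immediate from its cone description), so I choose finitely many such $u_1,\dots,u_N$ whose convex hull has volume at least $\operatorname{vol}(\Delta(S))-\epsilon/2$, and let $P$ be a common multiple of their denominators; using $kS_q\subset S_{kq}$, each $u_j\in\tfrac{1}{P}S_P$, so $\operatorname{vol}(K_P)\ge\operatorname{vol}(\Delta(S))-\epsilon/2$. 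For $p$ large, pick $n,r$ with $p=nP+r$, $r$ bounded, $S_r\neq\emptyset$ (possible since $\gcd\{q:S_q\ne\emptyset\}=1$), and fix $z\in S_r$; for every $x\in nS_P\subset S_{nP}$,
$$
\frac{x+z}{p}\;=\;\frac{nP}{p}\cdot\frac{x}{nP}+\frac{r}{p}\cdot\frac{z}{r}\in K_p,
$$
so $K_p$ contains a translate of $(nP/p)K_P$, giving $\operatorname{vol}(K_p)\ge(nP/p)^d\operatorname{vol}(K_P)\to\operatorname{vol}(K_P)$ as $p\to\infty$, and hence $\operatorname{vol}(K_p)\ge\operatorname{vol}(\Delta(S))-\epsilon$ for $p$ sufficiently large. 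The main obstacle is the middle step: promoting a single $P$ with $G_0^{(P)}=\Z^d$ to \emph{all} large $p$ via the Sylvester--Frobenius argument; the rest is then essentially bookkeeping around Theorem \ref{TheoremL}.
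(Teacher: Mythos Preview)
The paper does not give its own proof of this statement; it is quoted directly as Proposition~3.1 of \cite{LM} and used as a black box. So there is no ``paper's proof'' to compare against---your task was essentially to reconstruct the Lazarsfeld--Musta\c{t}\u{a} argument, and you have done so correctly.

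Your strategy is the natural one and matches the approach in \cite{LM}: for each $p$ build the auxiliary semigroup $S^{(p)}$ generated by $S_p\times\{1\}$, verify that it satisfies (\ref{Cone2}) and (\ref{Cone3}) for all large $p$, apply Theorem~\ref{TheoremL} to obtain existence of the limit with value $\mathrm{vol}(K_p)$ where $K_p=\mathrm{conv}(\tfrac1pS_p)$, and then show $\mathrm{vol}(K_p)\to\mathrm{vol}(\Delta(S))$. All three pieces are handled soundly. A couple of small remarks: your density claim for the points $y/q$ in $\Delta(S)$ is genuinely true (not just for their convex hull), because the set $\{y/q:y\in S_q\}$ is closed under rational convex combinations via the common-denominator trick you already use; and your final containment $K_p\supset\tfrac{nP}{p}K_P+\tfrac{z}{p}$ is justified by the identity $\mathrm{conv}(nS_P)=n\cdot\mathrm{conv}(S_P)$ for convex sets, which you are implicitly invoking. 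The Sylvester--Frobenius step promoting $G_0^{(P)}=\Z^d$ to all $p\ge p_0$ is correct as written: the key inputs are that $G_0^{(p)}+G_0^{(q)}\subset G_0^{(p+q)}$, that $G_0^{(P)}\subset G_0^{(P+q)}$ whenever $S_q\neq\emptyset$, and that $\{q:S_q\neq\emptyset\}$ is a numerical semigroup with $\gcd 1$.
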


\section{Graded families of ideals and volumes}\label{SecGF}

In this section, we develop  methods of computation of multiplicities as volumes, which we will need for the proof of Theorem \ref{MultFormula} in Section \ref{SecMT}.

\begin{lemma}\label{DimlessthandLemma} Suppose that $d$ is a positive integer and $R$ is a Noetherian local ring such that $\dim R < d$. Let $\mathcal{I} = \{I_i\}$ and $\mathcal{J} = \{J_i\}$ be graded families of ideals on $R$ such that $I_i\subset J_i$ for all $i$, and there exists a positive integer $c$ such that
\begin{equation*}
    I_i\cap m_R^{ci} = J_i\cap m_R^{ci}
\end{equation*}for all $i$. Then $\lim_{n\to\infty}\ell_R(J_n/I_n)/n^d=0$.
\end{lemma}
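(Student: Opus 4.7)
The plan is to show that $\ell_R(J_n/I_n)$ is dominated by the Hilbert--Samuel length $\ell_R(R/m_R^{cn})$, which grows polynomially in $n$ of degree at most $\dim R \le d-1$, and therefore becomes negligible when divided by $n^d$.

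First I would unpack the hypothesis. Combining the inclusion $I_n \subset J_n$ with the equality $I_n\cap m_R^{cn}=J_n\cap m_R^{cn}$ gives the containment $J_n\cap m_R^{cn}\subset I_n$. Hence the natural surjection $J_n/(J_n\cap m_R^{cn})\twoheadrightarrow J_n/I_n$ (induced by the inclusion $J_n \cap m_R^{cn} \subset I_n$) together with the embedding $J_n/(J_n\cap m_R^{cn})\cong (J_n+m_R^{cn})/m_R^{cn}\hookrightarrow R/m_R^{cn}$ yields
$$
\ell_R(J_n/I_n)\ \le\ \ell_R\!\left(J_n/(J_n\cap m_R^{cn})\right)\ \le\ \ell_R(R/m_R^{cn}).
$$
In particular, $\ell_R(J_n/I_n)$ is finite for all $n$.

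Next I would invoke the Hilbert--Samuel theorem: since $R$ is Noetherian local and $m_R^{cn}$ is $m_R$-primary (or zero in the Artinian case), the function $n\mapsto \ell_R(R/m_R^{cn})$ agrees with a polynomial in $n$ of degree exactly $\dim R$ for $n$ sufficiently large. Because $\dim R<d$, this degree is at most $d-1$, so $\ell_R(R/m_R^{cn})=O(n^{d-1})$, and the inequality above forces
$$
\lim_{n\to\infty}\frac{\ell_R(J_n/I_n)}{n^d}\ =\ 0.
$$

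There is no serious obstacle here; the only point to mention is the edge case $\dim R=0$, in which $R$ is Artinian, $m_R^{cn}=0$ for $n\gg 0$, and $\ell_R(R/m_R^{cn})$ is eventually the constant $\ell_R(R)$, so the conclusion is immediate. The graded-family condition $I_mI_n\subset I_{m+n}$ is not needed for the argument; only the pointwise inclusions and the pointwise saturation-type equality are used.
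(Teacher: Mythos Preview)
Your proof is correct and follows essentially the same approach as the paper: bound $\ell_R(J_n/I_n)$ by $\ell_R(R/m_R^{cn})$ via the inclusion $J_n/(J_n\cap m_R^{cn})\hookrightarrow R/m_R^{cn}$, and then use that the Hilbert--Samuel function has degree $\dim R\le d-1$. Your version is in fact slightly more streamlined than the paper's, which expresses $\ell_R(J_n/I_n)$ as the difference $\ell_R(J_n/J_n\cap m_R^{cn})-\ell_R(I_n/I_n\cap m_R^{cn})$ via a short exact sequence and then bounds each term separately; you instead observe directly that $J_n\cap m_R^{cn}\subset I_n$ gives a surjection $J_n/(J_n\cap m_R^{cn})\twoheadrightarrow J_n/I_n$, yielding the inequality in one step.
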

\begin{proof}
    Since $I_i\cap m_R^{ci} = J_i\cap m_R^{ci}$, we have a short exact sequence
    \begin{equation*}
        0\to \frac{I_i}{I_i\cap m_R^{ci}}\to\frac{J_i}{J_i\cap m_R^{ci}}\to \frac{J_i}{I_i}\to 0
    \end{equation*}for all $i$, so that
    \begin{equation}\label{lem_eq1}
        \ell_R(J_i/I_i) = \ell_R(J_i/J_i\cap m_R^{ci}) - \ell_R(I_i/I_i\cap m_R^{ci}).
    \end{equation}
    Since $\dim R\leq d-1$, there exist positive integers $N$ and $\gamma$ such that $\ell_R(R/m_R^{ci})\leq \gamma i^{d-1}$ for $i\geq N$. We also have an injective map $J_i/J_i\cap m_R^{ci}\to R/m_R^{ci}$. Hence,
    \begin{equation*}
        \lim_{n\to\infty}\frac{\ell_R(J_n/J_n\cap m_R^{cn})}{n^d}\leq \lim_{n\to\infty}\frac{\ell_R(R/m_R^{cn})}{n^d}
        \leq \lim_{n\to\infty}\frac{\gamma n^{d-1}}{n^d}=0.
    \end{equation*}
    Similarly, $\lim_{n\to\infty}\ell_R(I_n/I_n\cap m_R^{cn})/n^d=0$, and hence, $\lim_{n\to\infty}\ell_R(J_n/I_n)=0$ by (\ref{lem_eq1}).
\end{proof}

We first summarize a construction from Section 4 of \cite{C2}.
For the remainder of this section, 
let $R$ be a complete (Noetherian) local domain and
$\pi:X\rightarrow \mbox{spec}(R)$ be the normalization of the blow up of the maximal ideal $m_R$ of $R$. The scheme $X$ is of finite type over $R$ since $R$ is excellent.
Since $\pi^{-1}(m_R)$ has codimension 1 in $X$ and $X$ is normal, there exists a closed point $x\in X$ such that the local ring $\mathcal O_{X,x}$ is a regular local ring. Let $S$ be this local ring. $S$ is a regular local ring which  is essentially of finite type  and birational over $R$ ($R$ and $S$ have the same  quotient field ${\rm Q}(R)$).

Let $y_1,\ldots,y_d$ be a regular system of parameters in $S$. Let $\lambda_1,\ldots,\lambda_d$ be rationally independent real numbers, such that 
\begin{equation}\label{eq9}
\lambda_i\ge 1\mbox{ for all $i$}.
\end{equation}
 We define a valuation $\nu$ on
$Q(R)$ which dominates $S$ by prescribing 
$$
\nu(y_1^{a_1}\cdots y_d^{a_d})=a_1\lambda_1+\cdots+a_d\lambda_d
$$
for $a_1,\ldots,a_d\in \Z_+$, and $\nu(\gamma)=0$  if $\gamma\in S$ has nonzero residue in $S/m_S$.

Let $C$ be a coefficient set of $S$. Since $S$ is a regular local ring, for $r\in \Z_+$ and $f\in S$, there is a unique expression 
\begin{equation}\label{eqred11}
f=\sum s_{i_1,\ldots,i_d}y_1^{i_1}\cdots y_d^{i_d}+g_r
\end{equation}
with $g_r\in m_S^r$, $s_{i_1,\ldots,i_d}\in S$ and $i_1+\cdots+i_d<r$ for all $i_1,\ldots,i_d$ appearing in the sum. Take $r$ so large that 
$r> i_1\lambda_1+\cdots+i_d\lambda_d$ for some term with $s_{i_1,\ldots,i_d}\ne 0$. Then define
\begin{equation}\label{eqG61}
\nu(f)=\min\{i_1\lambda_1+\cdots+i_d\lambda_d\mid s_{i_1,\ldots,i_d}\ne 0\}.
\end{equation}
This definition is well defined, and we calculate that
$\nu(f+g)\ge \min\{\nu(f),\nu(g)\}$ and $\nu(fg)=\nu(f)+\nu(g)$ (by the uniqueness of the minimal value term in the expansion (\ref{eqred11})) for all $0\ne f,g\in S$. Thus $\nu$ is a valuation.
 Let $V_{\nu}$ be the valuation ring of $\nu$ in $Q(R)$. The value group $\Gamma_{\nu}$ of $V_{\nu}$ is the (nondiscrete) ordered subgroup 
$\Z\lambda_1+\cdots+\Z\lambda_d$ of $\R$. Since there is a unique monomial giving the minimum in (\ref{eqG61}), we have that the residue field of $V_{\nu}$ is
$S/m_S$.

Let $k=R/m_R$ and $k'=S/m_S=V_{\nu}/m_{\nu}$. Since $S$ is essentially of finite type over $R$, we have that $[k':k]<\infty$.

For $\lambda\in \R$, define  ideals $K_{\lambda}$ and $K_{\lambda}^+$ in $V_{\nu}$ by
$$
K_{\lambda}=\{f\in Q(R)\mid \nu(f)\ge\lambda\}
$$
and
$$
K_{\lambda}^+=\{f\in Q(R)\mid \nu(f)>\lambda\}.
$$

We follow the usual convention that $\nu(0)=\infty$ is larger than any element of $\R$. By Lemma 4.3 \cite{C1}, we have the following formula. The fact that $R$ is analytically irreducible is necessary for the validity of the formula.

\begin{equation}\label{eqred50}
\mbox{There exists $\alpha\in \Z_+$ such that $K_{\alpha n}\cap R\subset m_R^n$ for all $n\in \N$.}
\end{equation}


Let $\mathcal A=\{A_n\}$ be a graded family of ideals on $R$.

For $\beta\in \Z_{>0}$ and $t\ge  1$, define 
$$
\Gamma_{\beta}(\mathcal A)^{(t)}=\left\{
\begin{array}{l}
(n_1,\ldots,n_d,i)\in \N^{d+1}\mid \dim_k A_i\cap K_{n_1\lambda_1+\cdots+n_d\lambda_d}/A_i\cap K_{n_1\lambda_1+\cdots+n_d\lambda_d}^+\ge t\\
\mbox{ and }n_1+\cdots+n_d\le \beta i
\end{array}
\right\},
$$
and
$$
\hat{\Gamma}_{\beta}^{(t)}=
\left\{
\begin{array}{l}(n_1,\ldots,n_d,i)\in \N^{d+1}\mid \dim_k R\cap K_{n_1\lambda_1+\cdots+n_d\lambda_d}/R\cap K_{n_1\lambda_1+\cdots+n_d\lambda_d}^+\ge t\\
\mbox{ and }n_1+\cdots+n_d\le \beta i
\end{array}
\right\}.
$$

Define $\Gamma_{\beta}(\mathcal A)=\Gamma_{\beta}(\mathcal A)^{(1)}$ and $\hat\Gamma_{\beta}=\hat\Gamma_{\beta}^{(1)}$.

Let $\lambda=n_1\lambda_1+\cdots+n_d\lambda_d$ be such that $n_1+\cdots+n_d\le\beta i$. Then
\begin{equation}\label{eqred40}
\dim_kK_{\lambda}\cap A_i/K_{\lambda}^+\cap A_i=\#\{t|(n_1,\ldots,n_d,i)\in \Gamma_{\beta}(\mathcal A)^{(t)}\}.
\end{equation}

Let $s=[k':k]$. We have (by formula (15) \cite{C2} or the formula before Lemma 5.2 \cite{C3})  that 
\begin{equation}\label{eq1}
\Gamma_{\beta}(\mathcal A)^{(t)}=\emptyset\mbox{ for }t>s.
\end{equation}

Write $y_i=\frac{f_i}{g}$ with $f_i,g\in R$ for $1\le i\le d$.

For $0\ne f\in R$, define $\phi(f)=(n_1,\ldots,n_d)\in \N^d$ and $\psi(f)=n_1+\cdots+n_d$ if $v(f)=n_1\lambda_1+\cdots+n_d\lambda_d$.

\begin{lemma}\label{Lemma1} Let $\mathcal A=\{A_n\}$ be a graded family of ideals of $R$ and suppose that $A_1\ne 0$. Let $0\ne x\in A_1$ and suppose that 
$$
\beta\ge \beta_0(\mathcal A):=\max\{(\psi(xf_1),\ldots,\psi(xf_d), \psi(xg)\}.
$$
Then $\Gamma_{\beta}(\mathcal A)$ satisfies (\ref{Cone2}) and (\ref{Cone3}) of Theorem \ref{TheoremL}. 
\end{lemma}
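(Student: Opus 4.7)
The plan is to verify the two hypotheses (\ref{Cone2}) and (\ref{Cone3}) of Theorem \ref{TheoremL}, after first observing that $\Gamma_\beta(\mathcal{A})$ is itself a sub-semigroup of $\N^{d+1}$: multiplicativity of $\nu$ combined with $A_iA_j\subset A_{i+j}$ ensures that whenever $(n,i)$ and $(m,j)$ are realized by valuations of elements of $A_i$ and $A_j$, the point $(n+m,i+j)$ is realized by their product, and the inequality $|n|+|m|\leq \beta(i+j)$ is preserved.

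For (\ref{Cone2}), I would take $B$ to be the semigroup in $\N^{d+1}$ generated by the finite set $\{(v,1) : v\in\N^d,\ v_1+\cdots+v_d\leq\beta\}$. A greedy argument---distribute the $n_1+\cdots+n_d$ units of total $\ell^1$-mass into $i$ buckets of capacity $\beta$, which is possible exactly because $n_1+\cdots+n_d\leq\beta i$---identifies $B$ with $\{(n,i)\in\N^{d+1} : n_1+\cdots+n_d\leq\beta i\}$. The containment $\Gamma_\beta(\mathcal{A})\subset B$ is then immediate from the inequality $n_1+\cdots+n_d\leq\beta i$ that appears in the definition of $\Gamma_\beta(\mathcal{A})$.

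For (\ref{Cone3}), the idea is to exploit the fixed element $x\in A_1$ together with the expressions $y_i=f_i/g$. Since $f_1,\ldots,f_d,g\in R$, the products $xf_1,\ldots,xf_d$ and $xg$ all lie in $A_1$, and the lower bound $\beta\geq\beta_0(\mathcal{A})=\max\{\psi(xf_1),\ldots,\psi(xf_d),\psi(xg)\}$ guarantees that the lattice points $(\phi(xf_i),1)$ and $(\phi(xg),1)$ actually sit in $\Gamma_\beta(\mathcal{A})$. Multiplicativity of $\nu$ together with $\nu(y_i)=\lambda_i$ yields
$$\nu(xf_i)-\nu(xg)=\nu(f_i/g)=\nu(y_i)=\lambda_i,$$
and the rational independence of $\lambda_1,\ldots,\lambda_d$ forces the expansion in the basis $\lambda_1,\ldots,\lambda_d$ to be unique, so $\phi(xf_i)-\phi(xg)=e_i$, the $i$-th standard basis vector of $\Z^d$. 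Thus $(e_i,0)\in G(\Gamma_\beta(\mathcal{A}))$ for every $i$, hence $\Z^d\times\{0\}\subset G(\Gamma_\beta(\mathcal{A}))$; combining with $(\phi(xg),1)\in G(\Gamma_\beta(\mathcal{A}))$ then forces $(0,1)\in G(\Gamma_\beta(\mathcal{A}))$, so $G(\Gamma_\beta(\mathcal{A}))=\Z^{d+1}$.

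The only subtle point---and the precise reason for the particular form of $\beta_0(\mathcal{A})$---is ensuring that $(\phi(xf_i),1)$ and $(\phi(xg),1)$ really do belong to $\Gamma_\beta(\mathcal{A})$. Once that is in hand, the group-generation step is essentially dictated by the rational independence of the $\lambda_i$ and by the defining relation $y_i=f_i/g$.
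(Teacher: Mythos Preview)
Your proof is correct and follows essentially the same approach as the paper's: the same bounding semigroup $B=\{(n,i)\in\N^{d+1}:n_1+\cdots+n_d\le\beta i\}$ for (\ref{Cone2}), and the same use of the elements $xf_1,\ldots,xf_d,xg\in A_1$ to produce $(e_i,0)=(\phi(xf_i)-\phi(xg),0)$ and then $(0,\ldots,0,1)$ in $G(\Gamma_\beta(\mathcal A))$ for (\ref{Cone3}). You supply a bit more detail than the paper does (the semigroup property of $\Gamma_\beta(\mathcal A)$, the greedy decomposition for $B$, and the appeal to rational independence of the $\lambda_i$), but the argument is the same.
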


\begin{proof}    Let $\{e_1,\ldots,e_d\}$ be the standard basis of $\Z^d$.
The semigroup 
$$
B:=\{(n_1,\ldots,n_d,i)\mid (n_1,\ldots,n_d,i)\in \N^{d+1}\mbox{ and }
n_1+\cdots+n_d\le i\beta\}
$$
is generated by $B\cap (\N^d\times\{1\})$ and  contains $\Gamma_{\beta}(\mathcal A)$ so (\ref{Cone2}) of Theorem \ref{TheoremL} holds. 

Let $G=G(\Gamma_{\beta}(\mathcal A))$.
We have that $xf_1,\ldots,xf_d,xg\in A_1$ so 
$$
(\phi(xf_1),1),\ldots,(\phi(xf_d),1), (\phi(xg),1)\in G.
$$
Thus $(e_i,0)=(\phi(f_i)-\phi(g),0)\in G$ for $1\le i\le d$, and since $(\phi(xg),1)\in G$, we then have that $(0,\ldots,0,1)\in G$ and so (\ref{Cone3}) of Theorem \ref{TheoremL} holds.
\end{proof}

\begin{corollary}\label{Cor3} Suppose that $0\ne x\in A_1$ and $\beta_0(\mathcal A)$ of Lemma \ref{Lemma1} is calculated from $x$. Suppose that $\mathcal B=\{B_i\}$ is a graded family of ideals on $R$ such that $\mathcal A\subset \mathcal B$ ($A_i\subset B_i$ for all $i$)
 and $\beta\ge \beta_0(\mathcal A)$. Then $\Gamma_{\beta}(\mathcal B)$ satisfies (\ref{Cone2}) and (\ref{Cone3}) of Theorem \ref{TheoremL}.
\end{corollary}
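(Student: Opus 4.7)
The plan is to observe that Corollary~\ref{Cor3} is essentially immediate from the proof of Lemma~\ref{Lemma1}: the hypotheses on $\beta$ in that proof are driven entirely by the particular elements $x,xf_1,\ldots,xf_d,xg\in A_1$, and these elements remain in $B_1$ under the inclusion $\mathcal A\subset\mathcal B$. So both the containment in the bounding semigroup and the group-generation argument transfer verbatim.

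More concretely, I would first note that condition (\ref{Cone2}) transfers for free: the semigroup
$$
B=\{(n_1,\ldots,n_d,i)\in\N^{d+1}\mid n_1+\cdots+n_d\le i\beta\}
$$
used in the proof of Lemma~\ref{Lemma1} is generated by $B\cap(\N^d\times\{1\})$ and depends only on $\beta$, not on the graded family. Since every element of $\Gamma_\beta(\mathcal B)$ satisfies $n_1+\cdots+n_d\le\beta i$ by definition, $\Gamma_\beta(\mathcal B)\subset B$, so (\ref{Cone2}) of Theorem~\ref{TheoremL} holds for $\Gamma_\beta(\mathcal B)$.

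For condition (\ref{Cone3}), I would use the inclusion $A_1\subset B_1$ to conclude $x,xf_1,\ldots,xf_d,xg\in B_1$, and verify that the specific tuples $(\phi(xf_j),1)$ and $(\phi(xg),1)$ lie in $\Gamma_\beta(\mathcal B)$. The bound $\beta\ge\beta_0(\mathcal A)=\max\{\psi(xf_1),\ldots,\psi(xf_d),\psi(xg)\}$ guarantees that each of these vectors satisfies the inequality $n_1+\cdots+n_d\le\beta$; and the presence of $xf_j$ (resp.\ $xg$) as a nonzero element of $B_1$ with valuation $\nu(xf_j)$ (resp.\ $\nu(xg)$) ensures that the associated graded piece $B_1\cap K_{\nu(xf_j)}/B_1\cap K_{\nu(xf_j)}^+$ is nonzero, so the corresponding tuple lies in $\Gamma_\beta(\mathcal B)^{(1)}=\Gamma_\beta(\mathcal B)$. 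Repeating the end of the proof of Lemma~\ref{Lemma1} — subtracting $(\phi(xg),1)$ from each $(\phi(xf_j),1)$ to obtain the standard basis vectors $(e_j,0)$, and then using $(\phi(xg),1)$ to recover $(0,\ldots,0,1)$ — shows $G(\Gamma_\beta(\mathcal B))=\Z^{d+1}$.

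There is essentially no obstacle; the only point that needs a brief sanity check is that the inclusion $A_i\subset B_i$ really does imply $\Gamma_\beta(\mathcal A)\subset\Gamma_\beta(\mathcal B)$ (which in fact is stronger than what is needed). This comes from the injection of graded pieces $A_i\cap K_\lambda/A_i\cap K_\lambda^+\hookrightarrow B_i\cap K_\lambda/B_i\cap K_\lambda^+$ induced by inclusion, whose injectivity follows from $A_i\cap K_\lambda\cap(B_i\cap K_\lambda^+)=A_i\cap K_\lambda^+$. However, since we already have direct access to the witness elements $xf_j,xg\in B_1$, even this observation is unnecessary for the proof of the corollary.
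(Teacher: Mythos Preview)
Your proposal is correct and follows essentially the same approach as the paper. The paper's proof is even terser: it simply notes that since $x\in B_1$, computing $\beta_0(\mathcal B)$ from the same element $x$ yields $\beta_0(\mathcal B)=\beta_0(\mathcal A)$, so Lemma~\ref{Lemma1} applies directly to $\mathcal B$; you have unpacked this observation in full detail, but the content is identical.
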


\begin{proof} Since $x\in B_1$, the calculation of $\beta_0(\mathcal B)$ using $x$ in Lemma \ref{Lemma1} is $\beta_0(\mathcal B)=\beta_0(\mathcal A)$.
\end{proof}

\begin{corollary} \label{Cor1} Suppose that $0\ne x\in A_1$ and $\beta_0(\mathcal A)$ of Lemma \ref{Lemma1} is calculated from $x$.
Suppose that  $\beta\ge m\beta_0(\mathcal A)$. 
Let $\mathcal C=\{C_i\}$ be a graded family of ideals such that $A_1^m\subset C_1$. Then 
$\Gamma_{\beta}(\mathcal C)$ satisfies (\ref{Cone2}) and (\ref{Cone3}) of Theorem \ref{TheoremL}. 
\end{corollary}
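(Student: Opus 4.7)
The plan is to apply Lemma \ref{Lemma1} directly to the family $\mathcal{C}$, taking $y := x^m$ as the distinguished nonzero element of $C_1$. Since $R$ is a domain and $x \ne 0$, we have $y \ne 0$, and by hypothesis $y \in A_1^m \subset C_1$, so $y$ is eligible. The work then reduces to checking that the maximum $\max\{\psi(yf_1),\ldots,\psi(yf_d),\psi(yg)\}$ is bounded above by $\beta$.

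The only nontrivial input is that $\phi$ (and hence $\psi$) is additive on nonzero products of elements of $R$. This follows because $\lambda_1,\ldots,\lambda_d$ are rationally independent, so every element of the value group $\Gamma_\nu = \Z\lambda_1 + \cdots + \Z\lambda_d$ has a unique expression as a $\Z$-linear combination of the $\lambda_i$; combined with multiplicativity of $\nu$ this forces $\phi(uv)=\phi(u)+\phi(v)$ for nonzero $u,v \in R$. Applied to $y = x^m$ this gives $\psi(yf_i) = m\psi(x)+\psi(f_i)$ and $\psi(yg) = m\psi(x)+\psi(g)$, so
\[
\max\{\psi(yf_1),\ldots,\psi(yf_d),\psi(yg)\} \;=\; m\psi(x) + \max\{\psi(f_1),\ldots,\psi(f_d),\psi(g)\}.
\]

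Since $\nu$ dominates $S$ and $R \subset S$, the quantities $\psi(x),\psi(f_i),\psi(g)$ are all nonnegative, so for $m \ge 1$ the right-hand side is bounded above by $m\psi(x) + m\max\{\psi(f_i),\psi(g)\} = m\beta_0(\mathcal{A}) \le \beta$. Lemma \ref{Lemma1}, applied to $\mathcal{C}$ with the element $y$, then delivers (\ref{Cone2}) and (\ref{Cone3}) for $\Gamma_\beta(\mathcal{C})$. There is no substantive obstacle; the only point requiring attention is the additivity of $\phi$, which is immediate from the rational independence of the $\lambda_i$.
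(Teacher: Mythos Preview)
Your proof is correct and follows essentially the same route as the paper's: take $y=x^m\in C_1$, apply Lemma~\ref{Lemma1} to $\mathcal C$ with this element, and bound $\max\{\psi(x^mf_i),\psi(x^mg)\}$ by $m\beta_0(\mathcal A)$ using additivity of $\psi$ and nonnegativity of $\psi$ on $R$. The paper organizes the final inequality slightly differently (writing $m\beta_0(\mathcal A)=\max\{\psi(x^mf_i^m),\psi(x^mg^m)\}\ge\max\{\psi(x^mf_i),\psi(x^mg)\}$), but the underlying idea is identical; your version is arguably more explicit in justifying additivity of $\phi$ via the rational independence of the $\lambda_i$.
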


\begin{proof} We have that  $x^m\in A_1^m\subset C_1$. By Lemma \ref{Lemma1}, $\Gamma_{\beta}(C)$ satisfies (\ref{Cone2}) and (\ref{Cone3}) of Theorem \ref{TheoremL} for 
$$
\beta\ge \max\{\psi(x^mf_1),\ldots,\psi(x^mf_d),\psi(x^mg)\}.
$$
Now 
$$
\begin{array}{lll}
m\beta_0(\mathcal A)&=&m\max\{\psi(xf_1),\ldots,\psi(xf_d),\psi(xg)\}\\
&=&\max\{\psi(x^mf_1^m),\ldots,\psi(x^mf_d^m),\psi(x^mg^m)\}\\
&\ge& \max\{\psi(x^mf_1),\ldots,\psi(x^mf_d),\psi(x^mg)\}
\end{array}
$$
so the conclusions of the corollary holds.

\end{proof}

\begin{lemma}\label{Lemma4}(Lemma 4.4 \cite{C2}, Lemma 5.3 \cite{C3}) Suppose that  $0\ne f\in A_i$, $0\ne g\in A_j$ and
$$
\dim_kA_i\cap K_{\nu(f)}/A_i\cap K_{\nu(f)}^+=s.
$$
Then
\begin{equation}\label{eqred10}
\dim_k A_{i+j}\cap K_{\nu(fg)}/A_{i+j}\cap K_{\nu(fg)}^+=s.
\end{equation}
\end{lemma}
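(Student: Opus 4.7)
The plan is to use multiplication by $g$ to produce an injective $k$-linear map between the two quotient spaces in question, and then to invoke the general upper bound $s = [k':k]$ on the target to force equality.

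First I would note that multiplication by $g$ gives a $k$-linear map $\mu_g : A_i \to A_{i+j}$, using $A_iA_j \subset A_{i+j}$ from the graded-family axiom. Since $\nu$ is a valuation, $\nu(hg) = \nu(h) + \nu(g)$ for any $0 \neq h \in A_i$, so $h \in K_{\nu(f)}$ implies $hg \in K_{\nu(f)+\nu(g)} = K_{\nu(fg)}$, and similarly $h \in K_{\nu(f)}^+$ implies $hg \in K_{\nu(fg)}^+$. Hence $\mu_g$ restricts to compatible maps on the $K_{\nu(f)}$ and $K_{\nu(f)}^+$ pieces and descends to a $k$-linear map
$$
\bar{\mu}_g : \frac{A_i \cap K_{\nu(f)}}{A_i \cap K_{\nu(f)}^+} \;\longrightarrow\; \frac{A_{i+j} \cap K_{\nu(fg)}}{A_{i+j} \cap K_{\nu(fg)}^+}.
$$

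Next I would check injectivity. If $h \in A_i \cap K_{\nu(f)}$ has $\bar{\mu}_g(\bar h) = 0$, then $hg \in A_{i+j} \cap K_{\nu(fg)}^+$, i.e.\ $\nu(h) + \nu(g) = \nu(hg) > \nu(fg) = \nu(f) + \nu(g)$, so $\nu(h) > \nu(f)$, meaning $h \in A_i \cap K_{\nu(f)}^+$ and $\bar h = 0$. Thus $\bar\mu_g$ is injective, and the target has $k$-dimension at least $s$.

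For the matching upper bound, I would observe that $K_{\nu(fg)}/K_{\nu(fg)}^+$ is annihilated by $m_\nu$ and so is a vector space over $V_\nu/m_\nu = k'$; since $\nu(fg)$ lies in the value group it is in fact a one-dimensional $k'$-space, hence of dimension $s = [k':k]$ over $k$. The natural inclusion $A_{i+j} \cap K_{\nu(fg)}/A_{i+j} \cap K_{\nu(fg)}^+ \hookrightarrow K_{\nu(fg)}/K_{\nu(fg)}^+$ is $k$-linear, so the target in (\ref{eqred10}) has $k$-dimension at most $s$. Combined with injectivity of $\bar\mu_g$, this forces equality to $s$. The only subtlety I foresee is making sure the valuation is truly multiplicative on all of $Q(R)$ (not just on $S$) and that the graded-family inclusion $A_iA_j \subset A_{i+j}$ is invoked correctly, both of which are immediate from the setup.
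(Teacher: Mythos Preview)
Your argument is correct. The paper does not give its own proof of this lemma but merely cites Lemma~4.4 of \cite{C2} and Lemma~5.3 of \cite{C3}; your multiplication-by-$g$ injection combined with the general upper bound $\dim_k(A_n\cap K_\lambda/A_n\cap K_\lambda^+)\le [k':k]=s$ (recorded in the paper as (\ref{eq1})) is precisely the standard argument found in those references.
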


\begin{lemma}\label{lemma3}(Lemma 5.4 \cite{C3}) Suppose that $\Gamma_{\beta}(\mathcal A)^{(s)}\ne \emptyset$. Then $\Delta(\Gamma_{\beta}(\mathcal A)^{(s)})=\Delta(\Gamma_{\beta}(\mathcal A))$.
\end{lemma}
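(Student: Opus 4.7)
The plan is to establish the stronger statement $\Sigma(\Gamma_{\beta}(\mathcal A)^{(s)})=\Sigma(\Gamma_{\beta}(\mathcal A))$ as closed convex cones in $\R^{d+1}$; since $\Delta(T)=\Sigma(T)\cap(\R^{d}\times\{1\})$, this will immediately give the desired equality of $\Delta$'s. The inclusion $\Sigma(\Gamma_{\beta}(\mathcal A)^{(s)})\subset\Sigma(\Gamma_{\beta}(\mathcal A))$ is trivial from $\Gamma_{\beta}(\mathcal A)^{(s)}\subset\Gamma_{\beta}(\mathcal A)$, so only the reverse inclusion needs work.

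The key construction is multiplication by a fixed ``full type'' element. By hypothesis there exists $t_{0}=(m_{1},\ldots,m_{d},j)\in\Gamma_{\beta}(\mathcal A)^{(s)}$; I would fix a witness $g\in A_{j}$ of value $\nu(g)=m_{1}\lambda_{1}+\cdots+m_{d}\lambda_{d}$ realizing the full dimension $s$ of $A_{j}\cap K_{\nu(g)}/A_{j}\cap K_{\nu(g)}^{+}$ (which by (\ref{eq1}) is the maximum possible, so the hypothesis forces equality). For an arbitrary $w=(n_{1},\ldots,n_{d},i)\in\Gamma_{\beta}(\mathcal A)$, pick $f\in A_{i}$ with $\nu(f)=n_{1}\lambda_{1}+\cdots+n_{d}\lambda_{d}$; such $f$ exists because the quotient defining membership in $\Gamma_{\beta}(\mathcal A)$ is nonzero. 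For each $k\ge 1$, consider $f^{k}g\in A_{ki+j}$: valuativity of $\nu$ gives $\nu(f^{k}g)=(kn_{1}+m_{1})\lambda_{1}+\cdots+(kn_{d}+m_{d})\lambda_{d}$; Lemma \ref{Lemma4}, applied with $g$ playing the role of the element of full dimension $s$ and $f^{k}$ as the other factor, shows that the dimension-$s$ condition is inherited at this value; and the $\beta$-bound $(kn_{1}+m_{1})+\cdots+(kn_{d}+m_{d})\le\beta(ki+j)$ follows by adding $k$ times the bound for $w$ to the bound for $t_{0}$. Hence $kw+t_{0}\in\Gamma_{\beta}(\mathcal A)^{(s)}$ for every $k\ge 1$.

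The closed cone $\Sigma(\Gamma_{\beta}(\mathcal A)^{(s)})$ contains the positive multiple $(kw+t_{0})/k=w+t_{0}/k$ for every $k$, so letting $k\to\infty$ yields $w\in\Sigma(\Gamma_{\beta}(\mathcal A)^{(s)})$. This shows $\Gamma_{\beta}(\mathcal A)\subset\Sigma(\Gamma_{\beta}(\mathcal A)^{(s)})$, which forces $\Sigma(\Gamma_{\beta}(\mathcal A))\subset\Sigma(\Gamma_{\beta}(\mathcal A)^{(s)})$ and completes the proof. I do not anticipate a real obstacle: the argument reduces to the stability of the ``full dimension $s$'' condition under multiplication (Lemma \ref{Lemma4}), additivity of the $\beta$-bound in $(n_{1}+\cdots+n_{d},i)$, and closedness of $\Sigma$. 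The only piece of bookkeeping is to confirm that a prescribed $\nu$-value is realized by a genuine element of $A_{i}$, i.e.\ that an element of $A_{i}\cap K_{\lambda}\setminus A_{i}\cap K_{\lambda}^{+}$ has value exactly $\lambda$; this is immediate from the definitions.
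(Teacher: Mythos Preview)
Your argument is correct. The paper itself does not supply a proof of this lemma but simply cites Lemma~5.4 of \cite{C3}; your translation argument---using Lemma~\ref{Lemma4} to show $kw+t_{0}\in\Gamma_{\beta}(\mathcal A)^{(s)}$ for every $k\ge 1$ and then letting $k\to\infty$ inside the closed cone---is the natural route and is precisely the mechanism one expects the cited reference to use.
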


\begin{lemma}\label{lemma2} (Lemma 5.5 \cite{C3}) There exists $a\in R$ such that $\dim_kR\cap K_{v(a)}/R\cap K_{v(a)}^+=s$.
\end{lemma}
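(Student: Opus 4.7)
The plan is to exhibit a specific $a \in R$ for which the quotient $(R \cap K_{\nu(a)})/(R \cap K_{\nu(a)}^+)$ attains the maximum possible $k$-dimension $s$. The argument splits naturally into an easy upper bound valid for all values, and a construction that attains equality at one well-chosen value.

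First I would record the upper bound. For any $\lambda \in \Gamma_\nu$, fix any $h \in V_\nu$ with $\nu(h) = \lambda$; multiplication by $h^{-1}$ gives a $k$-linear isomorphism $K_\lambda/K_\lambda^+ \xrightarrow{\sim} V_\nu/m_\nu = k'$, under which $(R \cap K_\lambda)/(R \cap K_\lambda^+)$ embeds as a $k$-subspace of $k'$. Hence the dimension is at most $[k':k] = s$. (The $k$-structure on the source is the obvious one from $k = R/m_R \subset V_\nu/m_\nu = k'$, which is well-defined since $\nu$ dominates $R$.)

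To achieve equality, choose $c_1, \ldots, c_s \in S$ whose images in $S/m_S = k'$ form a $k$-basis. Since ${\rm Q}(S) = {\rm Q}(R)$, each $c_i$ lies in ${\rm Q}(R)$ and can be written $c_i = r_i/u_i$ with $r_i, u_i \in R$ and $u_i \neq 0$. Set $a := u_1 u_2 \cdots u_s \in R$ and $b_i := a c_i = r_i \prod_{j \neq i} u_j \in R$. Each $c_i$ is a unit in the local ring $S$, so $\nu(c_i) = 0$, whence $\nu(b_i) = \nu(a)$ and $b_i \in R \cap K_{\nu(a)}$. Under the isomorphism $K_{\nu(a)}/K_{\nu(a)}^+ \cong k'$ given by division by $a$, the class of $b_i$ maps to $\overline{c_i}$, and by construction these form a $k$-basis of $k'$. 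Hence $[b_1], \ldots, [b_s]$ are $k$-linearly independent in $(R \cap K_{\nu(a)})/(R \cap K_{\nu(a)}^+)$, and combined with the upper bound this forces the dimension to equal exactly $s$.

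The only delicate point is the translation from $S$ to $R$: since $S$ is a local ring on the normalized blowup of $m_R$ rather than a literal localization of $R$, one must use the birationality ${\rm Q}(S) = {\rm Q}(R)$ to clear denominators and realize a chosen residue basis of $k'$ inside $R$ after scaling by the common denominator $a = u_1 \cdots u_s$. Once this step is in hand, the computation of valuations and the transport of linear independence across the $k$-isomorphism $K_{\nu(a)}/K_{\nu(a)}^+ \cong k'$ are routine.
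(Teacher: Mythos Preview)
Your proof is correct. The paper does not supply its own proof of this lemma but simply cites Lemma~5.5 of \cite{C3}, so there is no in-paper argument to compare against; your construction---lifting a $k$-basis $\overline{c_1},\ldots,\overline{c_s}$ of $k'$ to units $c_i\in S$, clearing denominators via birationality to obtain $a=u_1\cdots u_s\in R$ and $b_i=ac_i\in R$, and then reading off linear independence through the $k$-linear embedding $(R\cap K_{\nu(a)})/(R\cap K_{\nu(a)}^+)\hookrightarrow K_{\nu(a)}/K_{\nu(a)}^+\cong k'$---is exactly the natural approach and uses only the setup already recorded in Section~\ref{SecGF}.
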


\begin{proposition}\label{Prop4} Let $0\ne x\in A_1$ and $\beta_1(\mathcal A):=\psi(a)+\psi(x)$. Then $\Gamma_{\beta}(\mathcal A)^{(s)}\ne\emptyset$ for $\beta\ge \beta_1(\mathcal A)$.
\end{proposition}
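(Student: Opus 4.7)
The plan is to exhibit a single explicit point of $\Gamma_\beta(\mathcal A)^{(s)}$ for every $\beta\ge\beta_1(\mathcal A)$, namely the point $(\phi(ax),1)$, where $a$ is the element supplied by Lemma \ref{lemma2} and $x$ is the given nonzero element of $A_1$.

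First I would verify that $ax$ is a legitimate, nonzero element of $A_1$. Since $a\in R=A_0$ and $x\in A_1$, the graded-family condition $A_0A_1\subset A_1$ puts $ax\in A_1$, and because $R$ is a complete local domain with $a,x\ne 0$, we have $ax\ne 0$.

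Next I would apply Lemma \ref{Lemma4} with $i=0$, $j=1$, $f=a$, $g=x$. The hypothesis of Lemma \ref{Lemma4} for $f=a\in A_0=R$ is precisely the defining property of $a$ from Lemma \ref{lemma2}, namely
$$\dim_k R\cap K_{\nu(a)}/R\cap K_{\nu(a)}^+=s.$$
The conclusion of Lemma \ref{Lemma4} therefore gives
$$\dim_k A_1\cap K_{\nu(ax)}/A_1\cap K_{\nu(ax)}^+=s.$$
Setting $(n_1,\ldots,n_d):=\phi(ax)$, so that $\nu(ax)=n_1\lambda_1+\cdots+n_d\lambda_d$, this is the dimension condition needed for $(\phi(ax),1)\in\Gamma_\beta(\mathcal A)^{(s)}$.

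Finally I would verify the remaining inequality $n_1+\cdots+n_d\le \beta\cdot 1$. Because $\nu$ is a valuation, $\nu(ax)=\nu(a)+\nu(x)$, and because $\lambda_1,\ldots,\lambda_d$ are rationally independent the representation of a valuation value as a non-negative integer combination of the $\lambda_i$ is unique, so $\phi$ (and hence $\psi$) is additive on products of nonzero elements: $\psi(ax)=\psi(a)+\psi(x)=\beta_1(\mathcal A)\le \beta$. This completes the verification that $(\phi(ax),1)\in\Gamma_\beta(\mathcal A)^{(s)}$. No step here is subtle; the whole argument is a careful bookkeeping combination of Lemmas \ref{Lemma4} and \ref{lemma2} with the additivity of $\psi$, so I do not anticipate any genuine obstacle.
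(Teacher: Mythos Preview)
Your proof is correct and follows essentially the same approach as the paper: exhibit $(\phi(ax),1)$ as a point of $\Gamma_\beta(\mathcal A)^{(s)}$ by applying Lemma~\ref{Lemma4} with $f=a\in A_0=R$ and $g=x\in A_1$, then check the constraint $\psi(ax)=\psi(a)+\psi(x)=\beta_1(\mathcal A)\le\beta$. Your write-up is simply more explicit than the paper's about why $ax\in A_1$, why $\psi$ is additive, and how Lemma~\ref{Lemma4} is being invoked.
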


\begin{proof} We have that $ax\in A_1$ and $\psi(ax)=\beta_1(\mathcal A)$.
By Lemma \ref{Lemma4},
\begin{equation}\label{eq2}
\dim_k (A_1\cap K_{v(ax)}/A_1\cap K_{v(ax)}^+)=s.
\end{equation}
 Hence $\Gamma_{\beta}(\mathcal A)^{(s)}\ne \emptyset$ for $\beta\ge \beta_1(\mathcal A)$.
\end{proof}

\begin{corollary}\label{Cor5} Let $0\ne x\in A_1$ and $\beta_1(\mathcal A):=\psi(a)+\psi(x)$. Suppose that $\mathcal B=\{B_i\}$ is a graded family of ideals such that $\mathcal A\subset \mathcal B$ and $\beta\ge \beta_1(\mathcal A)$.
Then $\Gamma_{\beta}(\mathcal B)^{(s)}\ne\emptyset$ for $\beta\ge \beta_1(\mathcal A)$.
\end{corollary}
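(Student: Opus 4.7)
The plan is to rerun the proof of Proposition \ref{Prop4} essentially verbatim, with $\mathcal A$ replaced by $\mathcal B$, using the same product $ax$ as the witness that $\Gamma_\beta(\mathcal B)^{(s)}$ is nonempty. Two observations make this transfer work: first, the element $a$ produced by Lemma \ref{lemma2} depends only on the ring $R$ and the valuation $\nu$, not on any particular graded family, so the same $a$ is available for $\mathcal B$; second, since $x\in A_1\subset B_1$ and $a\in R$, the product $ax$ still belongs to $B_1$. Thus the only thing that needs verification is that the dimension count $\dim_k B_1\cap K_{\nu(ax)}/B_1\cap K_{\nu(ax)}^+$ is exactly $s$.

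For this I would apply Lemma \ref{Lemma4} to the family $\mathcal B$ with $f=a\in B_0=R$ and $g=x\in B_1$. The hypothesis $\dim_k B_0\cap K_{\nu(a)}/B_0\cap K_{\nu(a)}^+=s$ is exactly the defining property of $a$ furnished by Lemma \ref{lemma2}, since $B_0=R$. The conclusion of Lemma \ref{Lemma4} then gives $\dim_k B_1\cap K_{\nu(ax)}/B_1\cap K_{\nu(ax)}^+=s$. Because $\psi(ax)=\psi(a)+\psi(x)=\beta_1(\mathcal A)\le\beta$, the tuple $(\phi(ax),1)$ satisfies $n_1+\cdots+n_d\le\beta\cdot 1$ and therefore lies in $\Gamma_\beta(\mathcal B)^{(s)}$, proving nonemptiness. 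There is no real obstacle here: the corollary is essentially a reindexing of Proposition \ref{Prop4}, and its only substantive content is the stability of both inputs to $\beta_1(\mathcal A)$—namely $a$ (coming from $R$ alone via Lemma \ref{lemma2}) and $x$ (via the inclusion $\mathcal A\subset\mathcal B$)—across graded families.
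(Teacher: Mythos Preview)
Your proposal is correct and follows essentially the same approach as the paper: the paper's proof is the one-liner ``This follows from Proposition~\ref{Prop4} since $x\in B_1$,'' meaning one applies Proposition~\ref{Prop4} directly to $\mathcal B$ using the same $x\in A_1\subset B_1$, which yields the same $\beta_1$. Your write-up simply unpacks that application of Proposition~\ref{Prop4} by rerunning its proof (via Lemma~\ref{Lemma4} with $f=a\in B_0=R$ and $g=x\in B_1$), which is exactly what the paper intends.
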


\begin{proof} This follows from Proposition \ref{Prop4} since $x\in B_1$.
\end{proof}

\begin{proposition}\label{Prop1} Let $0\ne x\in A_1$ and $\beta_1(\mathcal A):=\psi(a)+\psi(x)$.
Let $\mathcal C=\{C_i\}$ be a graded family of ideals such that $A_1^m\subset C_1$.
Then for $m\ge 1$, $\Gamma_{\beta}(\mathcal C)^{(s)}\ne \emptyset$ for $\beta\ge m\beta_1(\mathcal A)$.
\end{proposition}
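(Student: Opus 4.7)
The plan is to adapt the proof of Proposition \ref{Prop4} by replacing the distinguished element $x\in A_1$ with $x^m\in A_1^m\subset C_1$. Everything else in that argument then carries over essentially unchanged, once the $\psi$-bound is recomputed with the factor $m$ present.

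First I would take the same $a\in R$ supplied by Lemma \ref{lemma2}, so that $\dim_k R\cap K_{v(a)}/R\cap K_{v(a)}^+=s$. Because every graded family satisfies $C_0=R$ by definition, we may regard $a$ as an element of $C_0$; together with $x^m\in C_1$ this gives $ax^m\in C_1$. Applying Lemma \ref{Lemma4} to the family $\mathcal C$, with $(f,i)=(a,0)$ and $(g,j)=(x^m,1)$, then yields
\[
\dim_k C_1\cap K_{v(ax^m)}/C_1\cap K_{v(ax^m)}^+ = s,
\]
which is exactly the dimension condition required for a level-$s$ point of $\Gamma_\beta(\mathcal C)$ with last coordinate $i=1$.

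Next I would bound $\psi(ax^m)$. Since $\nu$ is a valuation and $\lambda_1,\ldots,\lambda_d$ are rationally independent, the map $\phi$, and therefore $\psi$, is additive on nonzero elements of $R$, so $\psi(ax^m)=\psi(a)+m\psi(x)$. Because $a\in R\subset S$ forces $\nu(a)\ge 0$, the coordinates of $\phi(a)$ are nonnegative integers and hence $\psi(a)\ge 0$; combining with $m\ge 1$ yields
\[
\psi(ax^m)=\psi(a)+m\psi(x)\le m\psi(a)+m\psi(x)=m\beta_1(\mathcal A)\le \beta.
\]
Thus the point $(\phi(ax^m),1)$ lies in $\Gamma_\beta(\mathcal C)^{(s)}$, and the set is nonempty.

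The argument is almost mechanical; the only mildly delicate point is confirming that Lemma \ref{Lemma4} may be invoked with source index $0$, which is legitimate because of the convention $C_0=R$ in the definition of a graded family combined with the fact that Lemma \ref{lemma2} supplies the required level-$s$ dimension jump for $a$ at that level. No new geometric input is needed beyond the ingredients already assembled for Proposition \ref{Prop4} and Corollary \ref{Cor1}.
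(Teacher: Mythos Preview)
Your proof is correct and follows essentially the same route as the paper: produce the element $ax^m\in C_1$, invoke Lemma~\ref{Lemma4} to obtain the level-$s$ jump at $\nu(ax^m)$, and bound $\psi(ax^m)=\psi(a)+m\psi(x)\le m\beta_1(\mathcal A)$. If anything, your write-up is slightly more careful than the paper's, since you make explicit that Lemma~\ref{Lemma4} is being applied to the family $\mathcal C$ with $a\in C_0=R$ and $x^m\in C_1$, whereas the paper's displayed equation is written with $A_m$ in place of $C_1$.
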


\begin{proof}
Let $h_m=ax^m\in A_1^m\subset C_1$. By Lemma \ref{Lemma4},
\begin{equation}
\dim_k (A_m\cap K_{v(h_m)}/A_m\cap K_{v(h_m)}^+)=s.
\end{equation}
Then 
$\psi(ax^m)=\psi(a)+m\psi(x)\le m\beta_1(\mathcal A)$. Hence $\Gamma_{\beta}(\mathcal C)^{(s)}\ne \emptyset$ for $\beta\ge m\beta_1(\mathcal A)$.
\end{proof}



We obtain the following strengthening of Theorem 5.6 \cite{C3}.
\begin{theorem}\label{TheoremB0} Suppose that $0\ne x\in A_1$ and $\beta_0(\mathcal A)$ of Lemma \ref{Lemma1} 
and $\beta_1(\mathcal A)$ of Proposition \ref{Prop4} are calculated from $x$. 
Let $\beta_2(\mathcal A)=\max\{\beta_0(\mathcal A),\beta_1(\mathcal A)\}$.
Then for $\beta\ge \beta_2(\mathcal A)$,
$$
\lim_{n\rightarrow\infty}\frac{\ell_R(A_n/A_n\cap K_{\beta n})}{n^d}
=[k':k]{\rm vol}(\Delta(\Gamma_{\beta}(\mathcal A))).
$$
\end{theorem}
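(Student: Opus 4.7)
The strategy is to rewrite the length via the valuation filtration, convert the resulting $k$-dimensions into lattice-point counts in the semigroups $\Gamma_\beta^{(t)}(\mathcal A)$ using (\ref{eqred40}), and finally apply Theorem \ref{TheoremL} to each. Because $m_R\subset m_\nu$, the successive quotients $A_n\cap K_\lambda/A_n\cap K_\lambda^+$ are annihilated by $m_R$, so their $R$-length equals their $k$-dimension, and the total length is obtained by summing these over all values $\lambda<\beta n$. Since every contributing $\lambda=n_1\lambda_1+\cdots+n_d\lambda_d<\beta n$ satisfies $n_1+\cdots+n_d\le\sum n_j\lambda_j<\beta n$ (using $\lambda_j\ge 1$), formula (\ref{eqred40}) applies throughout. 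Rearranging the double sum and using (\ref{eq1}) reduces the problem to computing $\lim\#\Gamma_\beta^{(t)}(\mathcal A)_n/n^d$ for $1\le t\le s$.

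For $t=1$, Lemma \ref{Lemma1} (using $\beta\ge\beta_0(\mathcal A)$) gives conditions (\ref{Cone2}) and (\ref{Cone3}), so Theorem \ref{TheoremL} yields $\#\Gamma_{\beta,n}/n^d\to{\rm vol}(\Delta(\Gamma_\beta))$. For the key case $t=s$, Proposition \ref{Prop4} (using $\beta\ge\beta_1(\mathcal A)$) produces $h=ax\in A_1$ with $\dim_k=s$, so $(\phi(h),1)\in\Gamma_\beta^{(s)}$ and (\ref{Cone2}) is inherited. Lemma \ref{Lemma4} then shows that for every $k\ge 1$ the elements $h^k,h^kf_i,h^kg\in A_k$ all have $\dim_k=s$. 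For $k$ sufficiently large we have $\psi(h^kf_i)=k\beta_1+\psi(f_i)\le k\beta$ and $\psi(h^kg)\le k\beta$, so $(\phi(h^kf_i),k)$ and $(\phi(h^kg),k)$ lie in $\Gamma_\beta^{(s)}$. Their differences $(\phi(f_i)-\phi(g),0)=(e_i,0)$ therefore lie in $G(\Gamma_\beta^{(s)})$ for each $i$, and together with $(\phi(h),1)\in\Gamma_\beta^{(s)}$ this forces $(0,\ldots,0,1)\in G(\Gamma_\beta^{(s)})$, verifying (\ref{Cone3}). Theorem \ref{TheoremL} combined with Lemma \ref{lemma3} now gives $\#\Gamma_{\beta,n}^{(s)}/n^d\to{\rm vol}(\Delta(\Gamma_\beta^{(s)}))={\rm vol}(\Delta(\Gamma_\beta))$. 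For intermediate $1\le t\le s$, the nesting $\Gamma_\beta^{(s)}\subset\Gamma_\beta^{(t)}\subset\Gamma_\beta^{(1)}$ and a sandwich argument yield $\#\Gamma_{\beta,n}^{(t)}/n^d\to{\rm vol}(\Delta(\Gamma_\beta))$, and summing over $t=1,\ldots,s$ gives the advertised limit $[k':k]{\rm vol}(\Delta(\Gamma_\beta))$.

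The principal obstacle is the verification of (\ref{Cone3}) for $\Gamma_\beta^{(s)}$ under only the hypothesis $\beta\ge\beta_2(\mathcal A)=\max\{\beta_0(\mathcal A),\beta_1(\mathcal A)\}$. A direct imitation of the proof of Lemma \ref{Lemma1} at height $1$ would need $\beta\ge\beta_1+\max\{\psi(f_1),\ldots,\psi(f_d),\psi(g)\}$, which in general strictly exceeds $\beta_2$. The remedy is to pass to height $k$: since $\psi(h^kf_i)=k\beta_1+\psi(f_i)$, the constraint $\psi(h^kf_i)\le k\beta$ becomes $\beta\ge\beta_1+\psi(f_i)/k$, automatically satisfied for $k$ large under $\beta\ge\beta_1$, which is part of the hypothesis $\beta\ge\beta_2$.
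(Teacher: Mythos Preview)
Your approach matches the paper's: decompose $\ell_R(A_n/A_n\cap K_{\beta n})$ via the valuation filtration into $\sum_{t=1}^{s}\#\Gamma_{\beta}(\mathcal A)^{(t)}_n$ using (\ref{eqred40}) and (\ref{eq1}), then compute each limit via Theorem~\ref{TheoremL}. The paper is terse here, citing Lemma~\ref{lemma3} for $\Delta(\Gamma_{\beta}^{(t)})=\Delta(\Gamma_{\beta})$ and leaving the verification of (\ref{Cone2})--(\ref{Cone3}) for $\Gamma_{\beta}^{(t)}$ with $t>1$ to the argument of Theorem~5.6 in \cite{C3}; your explicit sandwich argument together with a direct check of (\ref{Cone3}) for $\Gamma_{\beta}^{(s)}$ is a legitimate way to unpack this.

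There is one genuine slip. Your claim that $\psi(h^kf_i)=k\beta_1+\psi(f_i)\le k\beta$ holds for large $k$ ``under $\beta\ge\beta_1$'' fails at the boundary $\beta=\beta_1$, which is allowed whenever $\beta_1\ge\beta_0$ (so that $\beta_2=\beta_1$). Indeed $f_i=y_ig$ gives $\nu(f_i)=\lambda_i+\nu(g)>0$, hence $\psi(f_i)\ge 1$, and the inequality $k\beta_1+\psi(f_i)\le k\beta_1$ is impossible for every $k$. The fix is immediate: work instead at level~$2$ with $h\cdot(xf_i)$ and $h\cdot(xg)$ in $A_2$ (these have $\dim_k=s$ by Lemma~\ref{Lemma4}), for which $\psi(h\cdot xf_i)=\beta_1+\psi(xf_i)\le\beta_1+\beta_0\le 2\beta$, yielding $(e_i,0)\in G(\Gamma_{\beta}^{(s)})$ as before. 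More conceptually, Lemma~\ref{Lemma4} gives $\Gamma_{\beta}^{(s)}+\Gamma_{\beta}^{(1)}\subset\Gamma_{\beta}^{(s)}$; since $\Gamma_{\beta}^{(s)}\ne\emptyset$ (Proposition~\ref{Prop4}) and $G(\Gamma_{\beta}^{(1)})=\Z^{d+1}$ (Lemma~\ref{Lemma1}), one obtains $G(\Gamma_{\beta}^{(s)})=\Z^{d+1}$ directly, which is really what underlies Lemma~\ref{lemma3}.
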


\begin{proof} We start with a variation on formula (32) \cite{C3}. We have 
by (\ref{eqred40}) and (\ref{eq1})  that
\begin{equation}\label{eq3}
\ell_R(A_i/A_i\cap K_{\beta i})=\sum_{0\le \lambda<\beta i}
\dim_k(K_{\lambda}\cap A_i/K_{\lambda}^+\cap A_i)=\sum_{t=1}^{[k':k]}\#(\Gamma_{\beta}(\mathcal A)^{(t)}).
\end{equation}
We have that $\#(\Gamma_{\beta}(\mathcal A)^{(t)})\ne\emptyset$ for $1\le t\le s$ by Proposition \ref{Prop4}, so
$$
\Delta(\Gamma_{\beta}(\mathcal A)^{(t)})=\Delta(\Gamma_{\beta}(\mathcal A))
$$
for $1\le t\le [k':k]$ by Lemma \ref{lemma3}.
Now $\Gamma_{\beta}(\mathcal A)$ satisfies the conditions (\ref{Cone2}) and (\ref{Cone3}) of Theorem \ref{TheoremL} by Lemma \ref{Lemma1}, so that 
$$
\lim_{n\rightarrow\infty}\frac{\#(\Gamma_{\beta}(\mathcal A))}{n^d}={\rm vol}(\Delta(\Gamma_{\beta}(\mathcal A))
$$
by Theorem \ref{TheoremL}, from which we obtain the conclusions of Theorem \ref{TheoremB0}.
\end{proof}

\begin{theorem}\label{TheoremB1} Suppose that $0\ne x\in A_1$ and $\beta_0(\mathcal A)$ of Lemma \ref{Lemma1} 
and $\beta_1(\mathcal A)$ of Proposition \ref{Prop4} are calculated from $x$. 
Let $\beta_2(\mathcal A)=\max\{\beta_0(\mathcal A),\beta_1(\mathcal A)\}$.
Suppose that $\mathcal B=\{B_i\}$ is a graded family of ideals such that $\mathcal A\subset \mathcal B$.
Then for $\beta\ge \beta_2(\mathcal A)$,
$$
\lim_{n\rightarrow\infty}\frac{\ell_R(B_n/B_n\cap K_{\beta n})}{n^d}
=[k':k]{\rm vol}(\Delta(\Gamma_{\beta}(\mathcal B))).
$$
\end{theorem}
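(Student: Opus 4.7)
The proof should mirror Theorem \ref{TheoremB0} step by step, replacing each invocation of Lemma \ref{Lemma1} with Corollary \ref{Cor3}, and each invocation of Proposition \ref{Prop4} with Corollary \ref{Cor5}. These corollaries are designed precisely for this situation: they show that the thresholds $\beta_0(\mathcal{A})$ and $\beta_1(\mathcal{A})$, computed from a single $x \in A_1 \subset B_1$, continue to work for the larger family $\mathcal{B}$. This is the whole content of the strengthening: one does not need to compute any quantity from $\mathcal{B}$ itself.

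Concretely, I would fix $\beta \geq \beta_2(\mathcal{A})$ and proceed as follows. First, Corollary \ref{Cor3} gives that $\Gamma_\beta(\mathcal{B})$ satisfies (\ref{Cone2}) and (\ref{Cone3}), and Corollary \ref{Cor5} gives $\Gamma_\beta(\mathcal{B})^{(s)} \ne \emptyset$. Since $\Gamma_\beta(\mathcal{B})^{(t)} \supset \Gamma_\beta(\mathcal{B})^{(s)}$ for $t \leq s$, each of these is nonempty, so by Lemma \ref{lemma3} we have $\Delta(\Gamma_\beta(\mathcal{B})^{(t)}) = \Delta(\Gamma_\beta(\mathcal{B}))$ for $1 \leq t \leq s$, while (\ref{eq1}) ensures $\Gamma_\beta(\mathcal{B})^{(t)} = \emptyset$ for $t > s$.

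The $\mathcal{B}$-analog of (\ref{eq3}), obtained from (\ref{eqred40}) and (\ref{eq1}), reads
$$\ell_R(B_n/B_n \cap K_{\beta n}) \;=\; \sum_{t=1}^{s} \#\bigl((\Gamma_\beta(\mathcal{B})^{(t)})_n\bigr).$$
Theorem \ref{TheoremL} applied to $\Gamma_\beta(\mathcal{B})$ yields $\#((\Gamma_\beta(\mathcal{B}))_n)/n^d \to {\rm vol}(\Delta(\Gamma_\beta(\mathcal{B})))$. Exactly as in the conclusion of the proof of Theorem \ref{TheoremB0}, the equality of the $\Delta$'s for the intermediate sets $\Gamma_\beta(\mathcal{B})^{(t)}$ forces each term in the sum to have the same limit ${\rm vol}(\Delta(\Gamma_\beta(\mathcal{B})))$. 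Summing $s$ terms and using $s = [k':k]$ then delivers the stated formula.

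The bookkeeping is routine once the two corollaries are in place; the one step requiring a little care is the passage from the limit for $\Gamma_\beta(\mathcal{B})$ to the limit for each $\Gamma_\beta(\mathcal{B})^{(t)}$, but this is handled by the same sandwich/equal-$\Delta$ argument used in Theorem \ref{TheoremB0} and is not sensitive to whether one is working with $\mathcal{A}$ or with a larger $\mathcal{B}$. What is genuinely new is the uniformity: a single threshold $\beta_2(\mathcal{A})$ serves every family $\mathcal{B} \supset \mathcal{A}$ simultaneously, which is precisely the flexibility needed in Theorem \ref{MultFormula} to treat the infinite collection of pairs $(\mathcal{I}(m), \mathcal{J}(m))$.
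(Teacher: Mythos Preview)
Your proposal is correct and matches the paper's own proof exactly: the paper also states that the proof follows from that of Theorem \ref{TheoremB0} by replacing $\mathcal A$ with $\mathcal B$, Lemma \ref{Lemma1} with Corollary \ref{Cor3}, and Proposition \ref{Prop4} with Corollary \ref{Cor5}. Your added detail (the nonemptiness chain for $\Gamma_\beta(\mathcal B)^{(t)}$, the use of Lemma \ref{lemma3}, and the summation via the $\mathcal B$-analog of (\ref{eq3})) is just the unpacking of what ``follows from the proof of Theorem \ref{TheoremB0}'' means.
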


\begin{proof} The proof follows from the proof of Theorem \ref{TheoremB0}, replacing $\mathcal A=\{A_i\}$ with $\mathcal B=\{B_i\}$, the reference Proposition \ref{Prop4} with Corollary \ref{Cor5} and the reference Lemma \ref{Lemma1} with Corollary \ref{Cor3}.
\end{proof}

\begin{theorem}\label{TheoremB2} Suppose that $0\ne x\in A_1$ and $\beta_0(\mathcal A)$ of Lemma \ref{Lemma1} 
and $\beta_1(\mathcal A)$ of Proposition \ref{Prop4} are calculated from $x$. 
Let $\beta_2(\mathcal A)=\max\{\beta_0(\mathcal A),\beta_1(\mathcal A)\}$.
Let $\mathcal C=\{C_i\}$ be a graded family of ideals such that $A_1^m\subset C_1$.
Then for $\beta\ge m\beta_2(\mathcal A)$,
$$
\lim_{n\rightarrow\infty}\frac{\ell_R(C_n/C_n\cap K_{\beta n})}{n^d}
=[k':k]{\rm vol}(\Delta(\Gamma_{\beta}(\mathcal C))).
$$
\end{theorem}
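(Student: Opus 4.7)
The plan is to mirror the proof of Theorem \ref{TheoremB0} (equivalently, of Theorem \ref{TheoremB1}), substituting at each step the results that are valid under the hypothesis $A_1^m\subset C_1$ with the rescaled bound $\beta\ge m\beta_2(\mathcal A)$, namely Corollary \ref{Cor1} in place of Lemma \ref{Lemma1} (or Corollary \ref{Cor3}) and Proposition \ref{Prop1} in place of Proposition \ref{Prop4} (or Corollary \ref{Cor5}). No new geometric input beyond these ``$m$-twisted'' versions of the semigroup lemmas is needed.

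Concretely, I would first observe that formula (\ref{eq3}) of Theorem \ref{TheoremB0} depended only on (\ref{eqred40}) and (\ref{eq1}), which hold for any graded family of ideals; applied to $\mathcal C$ it reads
$$
\ell_R(C_n/C_n\cap K_{\beta n})=\sum_{t=1}^{[k':k]}\#\bigl(\Gamma_{\beta}(\mathcal C)^{(t)}_n\bigr).
$$
Since $\beta\ge m\beta_2(\mathcal A)\ge m\beta_0(\mathcal A)$, Corollary \ref{Cor1} gives that $\Gamma_{\beta}(\mathcal C)$ satisfies (\ref{Cone2}) and (\ref{Cone3}), so Theorem \ref{TheoremL} is available and yields
$$
\lim_{n\to\infty}\frac{\#\Gamma_{\beta}(\mathcal C)_n}{n^d}=\mathrm{vol}\bigl(\Delta(\Gamma_{\beta}(\mathcal C))\bigr).
$$
Since $\beta\ge m\beta_1(\mathcal A)$, Proposition \ref{Prop1} gives $\Gamma_{\beta}(\mathcal C)^{(s)}\neq\emptyset$, where $s=[k':k]$, and hence $\Gamma_{\beta}(\mathcal C)^{(t)}\neq\emptyset$ for all $1\le t\le s$; by Lemma \ref{lemma3} each of these sub-semigroups has the same Newton--Okounkov body $\Delta(\Gamma_{\beta}(\mathcal C))$.

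Putting these together exactly as in the proof of Theorem \ref{TheoremB0}, each of the $s$ non-empty summands in the displayed formula contributes $\mathrm{vol}(\Delta(\Gamma_{\beta}(\mathcal C)))$ to the limit after division by $n^d$, giving
$$
\lim_{n\to\infty}\frac{\ell_R(C_n/C_n\cap K_{\beta n})}{n^d}=[k':k]\,\mathrm{vol}\bigl(\Delta(\Gamma_{\beta}(\mathcal C))\bigr),
$$
which is the desired conclusion. The only place requiring any care is the verification that the needed emptiness/spanning conditions do hold with the factor $m$ absorbed into the threshold; but that is precisely the content of Corollary \ref{Cor1} and Proposition \ref{Prop1}, so there is no genuine obstacle, and the main step is simply to invoke the correct ``$m$-versions'' of the preparatory results at the corresponding points in the Theorem \ref{TheoremB0} argument.
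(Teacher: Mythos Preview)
Your proposal is correct and follows exactly the paper's own approach: the paper's proof of Theorem \ref{TheoremB2} consists of a single sentence saying to repeat the proof of Theorem \ref{TheoremB0} with $\mathcal C$ in place of $\mathcal A$, Proposition \ref{Prop1} in place of Proposition \ref{Prop4}, and Corollary \ref{Cor1} in place of Lemma \ref{Lemma1}. Your write-up spells this out in more detail than the paper does, but the strategy and the substitutions are identical.
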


\begin{proof} The proof follows from the proof of Theorem \ref{TheoremB0}, replacing $\mathcal A=\{A_i\}$ with $\mathcal C=\{C_i\}$, the reference Proposition \ref{Prop4} with Proposition \ref{Prop1} and the reference Lemma \ref{Lemma1} with Corollary \ref{Cor1}.
\end{proof}

\section{The main technical theorem}\label{SecMT}

In this section, we prove the following theorem, from which Theorem \ref{TheoremA} will follow.

\begin{Theorem}\label{MultFormula}
    Suppose that $R$ is an analytically unramified (Noetherian) local ring of dimension $d>0$ and that $\mathcal{I} = \{I_n\}$ and $\mathcal{J} = \{J_n\}$ are graded families of ideals in $R$ such that $I_n\subset J_n$ for $n\geq 1$. Suppose that for $m\in\mathbb{Z}_{>0}$, there exist graded families of ideals $\mathcal{I}(m) = \{I(m)_i\}$ and $\mathcal{J}(m) = \{J(m)_i\}$ such that $I(m)_i\subset J(m)_i$, $I(m)_1=I_m$ and $I(m)_i\subset I_{mi}$ for all $i$, $J(m)_1=J_m$, and $J(m)_i\subset J_{mi}$ for all $i$. Suppose that if $P$ is a minimal prime of $R$ such that $\dim(R/P)=d$, then if $I_1\subset P$, we have that $I_i\subset P$ for $i\geq 1$. Suppose further that there exists $c\in\mathbb{Z}_{>0}$ such that
\begin{equation*}
    I(m)_i\cap m_R^{cim} = J(m)_i\cap m_R^{cim}
\end{equation*} for all $i$ and $m$. Then the limit
\begin{equation*}
    F(\mathcal{I},\mathcal{J}) = \lim_{n\to\infty}\frac{\ell_R(J_n/I_n)}{n^d/d!}
\end{equation*}exists, and for $m\in\mathbb{Z}_{>0}$, the limits
\begin{equation*}
    G(\mathcal{I}(m),\mathcal{J}(m)) = \lim_{k\to\infty}\frac{\ell_R(J(m)_k/I(m)_k)}{k^d/d!}
\end{equation*}exist. Further, the limit
\begin{equation*}
    \lim_{m\to\infty}\frac{G(\mathcal{I}(m),\mathcal{J}(m))}{m^d}
\end{equation*}exists and,
\begin{equation*}
    F(\mathcal{I},\mathcal{J}) = \lim_{m\to\infty}\frac{G(\mathcal{I}(m),\mathcal{J}(m))}{m^d}.
\end{equation*}
\end{Theorem}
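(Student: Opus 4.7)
The plan is to deduce existence of $F(\mathcal{I},\mathcal{J})$ and each $G(\mathcal{I}(m),\mathcal{J}(m))$ from Theorem 6.1 of \cite{C2}, and then to establish the identity $F=\lim_m G/m^d$ via the Okounkov-body machinery of Section \ref{SecGF}. Note first that applying the saturation hypothesis at $m'=n$, $i=1$ yields $I_n\cap m_R^{cn}=J_n\cap m_R^{cn}$ for every $n$, so $(\mathcal{I},\mathcal{J})$ itself satisfies the saturation needed by Theorem 6.1 of \cite{C2}; the minimal-prime hypothesis transmits to each pair $(\mathcal{I}(m),\mathcal{J}(m))$ because $I(m)_1=I_m$ and $I(m)_i\subset I_{mi}$. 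To prove the equality, I reduce to the case that $R$ is a complete local domain of dimension $d$ by passing to $\hat R$, enumerating its minimal primes, discarding those of dimension $<d$ using Lemma \ref{DimlessthandLemma}, and using the minimal prime hypothesis to descend the graded families to each $d$-dimensional quotient $\hat R/P_j$.

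Now assume $R$ is a complete local domain and (trivial case aside) $I_1\ne 0$. Pick $0\ne x\in I_1=I(1)_1$, set up $\nu$ as in Section \ref{SecGF}, compute $\beta_2(\mathcal{I}(1))$ from $x$, let $\alpha$ be the constant in (\ref{eqred50}), and fix $\beta\ge\max(\beta_2(\mathcal{I}(1)),\,c\alpha)$. Theorem \ref{TheoremB1} with $\mathcal{A}=\mathcal{I}(1)$ and $\mathcal{B}\in\{\mathcal{I},\mathcal{J}\}$ computes $\lim_n\ell_R(I_n/(I_n\cap K_{\beta n}))/n^d$ and $\lim_n\ell_R(J_n/(J_n\cap K_{\beta n}))/n^d$ as $[k':k]\,\mathrm{vol}(\Delta(\Gamma_\beta(\cdot)))$. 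Since $K_{\beta n}\cap R\subset m_R^{cn}$ and $I_n\cap m_R^{cn}=J_n\cap m_R^{cn}$, one has $J_n\cap K_{\beta n}=I_n\cap K_{\beta n}$, so subtracting yields
$$F(\mathcal{I},\mathcal{J})/d!\,=\,[k':k]\bigl(\mathrm{vol}(\Delta(\Gamma_\beta(\mathcal{J})))-\mathrm{vol}(\Delta(\Gamma_\beta(\mathcal{I})))\bigr).$$
The parallel application of Theorem \ref{TheoremB2} with $\mathcal{A}=\mathcal{I}(1)$ and $\mathcal{C}\in\{\mathcal{I}(m),\mathcal{J}(m)\}$ at truncation $m\beta$ (noting $I_1^m\subset I_m=I(m)_1$), together with the saturation $I(m)_k\cap m_R^{cmk}=J(m)_k\cap m_R^{cmk}$, gives
$$G(\mathcal{I}(m),\mathcal{J}(m))/d!\,=\,[k':k]\bigl(\mathrm{vol}(\Delta(\Gamma_{m\beta}(\mathcal{J}(m))))-\mathrm{vol}(\Delta(\Gamma_{m\beta}(\mathcal{I}(m))))\bigr).$$

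It then suffices to show, for $\mathcal{A}\in\{\mathcal{I},\mathcal{J}\}$, that $\mathrm{vol}(\Delta(\Gamma_{m\beta}(\mathcal{A}(m))))/m^d\to\mathrm{vol}(\Delta(\Gamma_\beta(\mathcal{A})))$. The upper bound is automatic from $A(m)_i\subset A_{mi}$: the injection $(n,i)\mapsto(n,mi)$ of $\Gamma_{m\beta}(\mathcal{A}(m))$ into $\Gamma_\beta(\mathcal{A})$ shows $\Delta(\Gamma_{m\beta}(\mathcal{A}(m)))\subset m\cdot\Delta(\Gamma_\beta(\mathcal{A}))$. For the matching lower bound, fix $\epsilon>0$, apply Theorem \ref{ConeTheorem2} to $S=\Gamma_\beta(\mathcal{A})$ to obtain $p_0(\epsilon)$, and for $p\ge p_0$ note that any element of $kS_p$ is the $\nu$-vector of a product $\prod_{j=1}^kf_j$ with $f_j\in A_p=A(p)_1$, which lies in $A(p)_1^k\subset A(p)_k$; hence $kS_p$ injects into $\Gamma_{p\beta}(\mathcal{A}(p))_k$, and Theorem \ref{TheoremL} applied to $\Gamma_{p\beta}(\mathcal{A}(p))$ (valid by Corollary \ref{Cor1}) gives
$$\frac{\mathrm{vol}(\Delta(\Gamma_{p\beta}(\mathcal{A}(p))))}{p^d}\,\ge\,\mathrm{vol}(\Delta(\Gamma_\beta(\mathcal{A})))-\epsilon$$
for $p\ge p_0$. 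Letting $\epsilon\to 0$, the two bounds match; taking the difference for $\mathcal{I}$ and $\mathcal{J}$ and multiplying by $d![k':k]$ completes the proof. The crux is the uniform control over $\beta$: a single $\beta$, depending only on $x$, $\alpha$, $c$, must simultaneously drive Theorem \ref{TheoremB1} for $(\mathcal{I},\mathcal{J})$ and, uniformly in $m$, Theorem \ref{TheoremB2} for $(\mathcal{I}(m),\mathcal{J}(m))$ at level $m\beta$. This is the role of the common base family $\mathcal{I}(1)$, and is exactly the subtlety flagged in the introduction.
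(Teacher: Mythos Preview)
Your argument in the complete-domain case is correct and matches the paper's exactly: the same choice of $\beta$ via Theorems \ref{TheoremB0}--\ref{TheoremB2}, the same volume difference formulas for $F$ and $G$, and the same sandwich argument (inclusion $\Gamma_{m\beta}(\mathcal{A}(m))\hookrightarrow\Gamma_\beta(\mathcal{A})$ for the upper bound, Theorem \ref{ConeTheorem2} for the lower).

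The gap is in the reduction to the complete-domain case, which you compress into one sentence (``descend the graded families to each $d$-dimensional quotient $\hat R/P_j$'') but which is in fact the most delicate part of the proof. Two concrete problems arise if by ``descend'' you mean passing to the images $I_n(\hat R/P_j)$, $J_n(\hat R/P_j)$, etc.:
\begin{enumerate}
\item There is no additivity of lengths over minimal primes: in general $\ell_R(J_n/I_n)\ne\sum_j\ell_{R/P_j}\bigl((J_n+P_j)/(I_n+P_j)\bigr)$, so you cannot simply sum the domain answers.
\item The saturation hypothesis need not descend: from $I(m)_k\cap m_R^{cmk}=J(m)_k\cap m_R^{cmk}$ one does not get $(I(m)_k+P_j)\cap(m_R^{cmk}+P_j)=(J(m)_k+P_j)\cap(m_R^{cmk}+P_j)$, since intersection does not commute with quotient by $P_j$.
\end{enumerate}
The paper addresses both issues by a nontrivial construction. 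For (1), it introduces ideals $\omega_n=\bigcap_j(m_R^{\beta n}+P_j)$ and a filtration $E_n^0\supset\cdots\supset E_n^r$, so that $\ell_R(J_n/I_n)$ decomposes as a telescoping sum of lengths, each of which is a length over a single $R_{j+1}=R/P_{j+1}$. For (2), the families it puts on $R_{j+1}$ are \emph{not} the naive images: one takes $\overline{J}_n=(E_n^j\cap J_n)R_{j+1}$ and then \emph{defines} $\overline{I}_n:=\overline{J}_n\cap m_{R_{j+1}}^{\beta n}$ (and analogously for $\overline{I}(m)_k$), so that the saturation condition holds by construction; a separate check is then needed to verify the remaining hypotheses (in particular, the minimal-prime condition on the new $\overline{\mathcal I}$, and the compatibilities $\overline{I}(m)_1=\overline{I}_m$, $\overline{J}(m)_k\subset\overline{J}_{mk}$). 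Only after all of this can one invoke the complete-domain case on each $R_{j+1}$ and reassemble.

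So: the analytic core of your proof is right, but the reduction step needs to be carried out in full, not asserted.
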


We see that the condition on minimal primes is automatic in the case that $\{I_n\}$ is a filtration.

For the remainder of this section, let assumptions be as in the statement of Theorem \ref{MultFormula}.

\begin{Remark}\label{RemarkMinimal}
    Suppose that $P$ is a minimal prime of $R$ such that $\dim(R/P)=d$ and $I_1\subset P$, then $I_i,J_i\subset P$ for each $i$, and $I(m)_k,J(m)_k\subset P$ for all $m$ and $k$.
\end{Remark}
\begin{proof}
    $I_i\subset P$ by assumption. We also have that $I(m)_k\subset I_{mk}\subset P$. Moreover, $J(m)_k\neq R$, otherwise $m_R^{cmk} =I(m)_k\cap m_R^{cmk}\subset P$, so that $m_R\subset P$, contradicting the fact that $\dim(R)\neq 0$. Thus, $J(m)_k^{cmk}\subset J(m)_k\cap m_R^{cmk}= I(m)_k\cap m_R^{cmk}\subset P$ so that $J(m)_k\subset P$. In particular, $J_i = J(i)_1\subset P$.
\end{proof}

To prove Theorem \ref{MultFormula}, we first reduce to the case that $R$ is complete.
By flatness of $R\to \widehat{R}$, we have
\begin{equation*}
\begin{split}
    I(m)_k\widehat{R}\cap m_{\widehat{R}}^{ckm}
    &= I(m)_k\widehat{R}\cap m_R^{ckm}\widehat{R}
    =(I(m)_k\cap m_R^{ckm})\widehat{R}
    \\&=(J(m)_k\cap m_R^{ckm})\widehat{R}
    = J(m)_k\widehat{R} \cap m_{\widehat{R}}^{ckm}.
\end{split}
\end{equation*}
We will show that if $Q$ is a minimal prime of $\widehat{R}$ such that $\dim(\widehat{R}/Q)=d$, and $\widehat{I_1}\subset Q$, then $\widehat{I_i}\subset Q$ for all $i$. We have $I_1\subset \widehat{I_1}\cap R\subset Q\cap R$, while $Q\cap R$ is a prime ideal of $R$. Since $(Q\cap R)\widehat{R}\subset Q$, we have that
\begin{equation*}
\begin{split}
    d&=\dim R\geq\dim R/Q\cap R
    =\dim \widehat{R/Q\cap R}
    =\dim \widehat{R}/(Q\cap R)\widehat{R}
    \\&\geq \dim \widehat{R}/Q= d
\end{split}
\end{equation*}which shows that $Q\cap R$ is a minimal prime of $R$ such that $\dim R/Q\cap R = d$. Now, since $I_1\subset Q\cap R$, $I_i\subset Q\cap R$ so that $\widehat{I_i}=I_i\widehat{R}\subset (Q\cap R)\widehat{R}\subset Q$ for all $i$. Thus the assumptions of Theorem \ref{MultFormula}  hold for the graded family $\widehat{\mathcal{I}} = \Big\{\widehat{I}_i\Big\}$, and by Remark \ref{RemarkMinimal}, they also hold for 
$\widehat{\mathcal{J}}=\Big\{\widehat{J}_i\Big\}$, $\widehat{\mathcal{I}}(m) = \Big\{\widehat{I(m)_k}\Big\}$, and $\widehat{\mathcal{J}}(m) = \Big\{\widehat{J(m)_k}\Big\}$ in $\widehat{R}$. Since $m_R^{ckm}(J(m)_k/I(m)_k)=0$ for all $m$ and $k$, we have that
\begin{equation*}
    \ell_{\widehat{R}}\bigg(\widehat{J(m)_k}/\widehat{I(m)_k}\bigg)
    = \ell_{R}(J(m)_k/I(m)_k).
\end{equation*}
for $m,k\geq 1$. Thus we may assume that $R$ is complete in the statement of Theorem \ref{MultFormula} by replacing $R$, $I_n$, $J_n$, $I(m)_k$, and $J(m)_k$ with $\widehat{R}$, $I_n\widehat{R}$, $J_n\widehat{R}$, $I(m)_k\widehat{R}$, and $J(m)_k\widehat{R}$ respectively.\\

Now we prove Theorem \ref{MultFormula} in the case that $R$ is a complete domain. If $I_1=0$, then $I_i=0$ for all $i$ by assumption and so
\begin{equation*}
    I_i,J_i,I(m)_k,J(m)_k=0
\end{equation*}for all $i$, $m$, and $k$ by Remark \ref{RemarkMinimal}. Now $F(\mathcal{I},\mathcal{J}) = d!\lim_{n\to\infty}\ell_R(0/0)/n^d \\= d!\lim_{n\to\infty}0=0$, and $G(\mathcal{I}(m),\mathcal{J}(m)) = d!\lim_{k\to\infty}\ell_R(0/0)/k^d=0$ for all $m$. Thus,
\begin{equation*}
    \lim_{m\to\infty}\frac{G(\mathcal{I}(m),\mathcal{J}(m))}{m^d}=\lim_{m\to\infty}0/m^d=0
\end{equation*}and equals $F(\mathcal{I},\mathcal{J})$.\\

We now prove the theorem with the assumptions that $R$ is a complete domain and $I_1\ne 0$, so that $J_1\ne 0$, $I(m)_1\ne 0$ for all $m$ and $J(m)_1\ne 0$ for all $m$.

By Theorems \ref{TheoremB0}, \ref{TheoremB1} and \ref{TheoremB2}, $\beta\in \Z_{>0}$ can be chosen large enough that 
\begin{equation}\label{Leq1}
\lim_{n\rightarrow\infty} \frac{\ell_R(I_n/I_n\cap K_{\beta n})}{n^d}=[k':k]{\rm vol}(\Delta(\Gamma_{\beta}(\mathcal I)),
\end{equation}

\begin{equation}\label{Leq2}
\lim_{n\rightarrow\infty} \frac{\ell_R(J_n/J_n\cap K_{\beta n})}{n^d}=[k':k]{\rm vol}(\Delta(\Gamma_{\beta}(\mathcal J)),
\end{equation}

\begin{equation}\label{Leq3}
\lim_{k\rightarrow\infty} \frac{\ell_R(I(m)_k/I(m)_k\cap K_{\beta mk})}{k^d}=[k':k]{\rm vol}(\Delta(\Gamma_{m\beta}(\mathcal I(m)))
\end{equation}
and
\begin{equation}\label{Leq4}
\lim_{k\rightarrow\infty} \frac{\ell_R(J(m)_k/J(m)_k\cap K_{\beta mk})}{k^d}=[k':k]{\rm vol}(\Delta(\Gamma_{m\beta}(\mathcal J(m))).
\end{equation}
We further require that  $\beta\ge \alpha c$, where $\alpha$ is the constant of  (\ref{eqred50}), so that 
\begin{equation}\label{Leq5}
K_{\beta n}\cap R\subset m_R^{cn}
\end{equation}
for all $n$. Thus
$$
\ell_R(J_n/I_n)=\ell_R(J_n/J_n\cap K_{\beta n})-\ell_R(I_n/I_n\cap K_{\beta n})
$$
for all $n$, and so by (\ref{Leq2}) and (\ref{Leq1}), 
\begin{equation}\label{Leq6}
\lim_{n\rightarrow\infty}\frac{\ell_R(J_n/I_n)}{n^d}=[k':k]\left({\rm vol}(\Delta(\Gamma_{\beta}(\mathcal J)))-{\rm vol}(\Delta(\Gamma_{\beta}(\mathcal I)))\right).
\end{equation}
Thus the limit 
$$
F(\mathcal I,\mathcal J)=d!\lim_{n\rightarrow\infty}\frac{\ell_R(J_n/I_n)}{n^d}
$$
exists.

By (\ref{Leq5}), we have that 
$$
\ell_R(J(m)_k/I(m)_k)=\ell_R(J(m)_k/J(m)_k\cap K_{\beta m k})-\ell_R(I(m)_k/I(m)_k\cap K_{\beta m k})
$$
for all $m$ and $k$. Thus by (\ref{Leq3}) and (\ref{Leq4}), for all $m$,
\begin{equation}\label{Leq7}
\lim_{k\rightarrow\infty}\frac{\ell_R(J(m)_k/I(m)_k)}{k^d}
=[k':k]\left({\rm vol}(\Delta(\Gamma_{\beta m}(\mathcal J(m))))-{\rm vol}(\Delta(\Gamma_{\beta m}(\mathcal I(m))))\right)
\end{equation}
and so the limits
$$
G(\mathcal I(m),\mathcal J(m))=d!\lim_{k\rightarrow\infty}\frac{\ell_R(J(m)_k/I(m)_k)}{k^d}
$$
exist.

The definition of $k*S_m$ for a subsemigroup $S$ of $\N^{d+1}$ is given in Theorem \ref{ConeTheorem2}. Let $\pi:\N^{d+1}\rightarrow \N^d$ be projection onto the first $d$-factors. 
Since $I_m^k\subset I(m)_k\subset I_{mk}$ for all $m$ and $k$, 
$$
\pi(k*\Gamma_{\beta}(\mathcal I)_m)\subset \pi(\Gamma_{\beta m}(\mathcal I(m)_k))\subset \pi(\Gamma_{\beta}(\mathcal I)_{mk}).
$$
so that
$$
\#(k*\Gamma_{\beta}(\mathcal I)_m)\le \#(\Gamma_{\beta m}(\mathcal I(m)_k))\le\#(\Gamma_{\beta}(\mathcal I)_{mk}).
$$

By our choice of $\beta$, $\Gamma_{\beta}(\mathcal I)$ satisfies (\ref{Cone2}) and (\ref{Cone3}), so by 
Theorem \ref{ConeTheorem2}, for given $\epsilon>0$, there exists $m_0>0$ such that 
$$
{\rm vol}(\Delta(\Gamma_{\beta}(\mathcal I)))-\epsilon\le
\lim_{k\rightarrow \infty}\frac{\#(k*\Gamma_{\beta}(\mathcal I)_m)}{k^dm^d}.
$$
Thus
$$
\begin{array}{lll}
{\rm vol}(\Delta(\Gamma_{\beta}(\mathcal I)))-\epsilon&\le&
\lim_{k\rightarrow \infty}\frac{\#(\Gamma_{\beta m}(\mathcal I(m)_k))}{k^dm^d}
=\frac{{\rm vol}(\Delta(\Gamma_{\beta m}(\mathcal I(m))))}{m^d}\\
&\le&\lim_{k\rightarrow\infty}\frac{\#(\Gamma_{\beta}(\mathcal I)_{mk})}{k^dm^d}
={\rm vol}(\Delta(\Gamma_{\beta}(\mathcal I)))
\end{array}
$$
for $m\ge m_0$.

Similarly, after possibly increasing $m_0$, we also have that 
$$
{\rm vol}(\Delta(\Gamma_{\beta}(\mathcal J)))-\epsilon\le
\lim_{k\rightarrow \infty}\frac{\#(\Gamma_{\beta m}(\mathcal J(m)_k))}{k^dm^d}
=\frac{{\rm vol}(\Delta(\Gamma_{\beta m}(\mathcal J(m))))}{m^d}
\le{\rm vol}(\Delta(\Gamma_{\beta}(\mathcal J)))
$$
for $m\ge m_0$. Thus,
\begin{equation}\label{Leq8}
{\rm vol}(\Delta(\Gamma_{\beta}(\mathcal I)))
=\lim_{m\rightarrow \infty}\frac{{\rm vol}(\Delta(\Gamma_{\beta m}(\mathcal I(m))))}{m^d}
\end{equation}
and
\begin{equation}\label{Leq9}
{\rm vol}(\Delta(\Gamma_{\beta}(\mathcal J)))
=\lim_{m\rightarrow \infty}\frac{{\rm vol}(\Delta(\Gamma_{\beta m}(\mathcal J(m))))}{m^d}.
\end{equation}
So by (\ref{Leq6}),  (\ref{Leq8}), (\ref{Leq9}) and (\ref{Leq7})
$$
\begin{array}{lll}
F(\mathcal I,\mathcal J)&=& d!\displaystyle\lim_{n\rightarrow\infty}\frac{\ell_R(J_n/I_n)}{n^d}\\
&=&d![k':k']\left({\rm vol}(\Delta(\Gamma_{\beta}(\mathcal J)))
-{\rm vol}(\Delta(\Gamma_{\beta}(\mathcal I)))\right)\\
&=& d![k':k]\left(\displaystyle\lim_{m\rightarrow \infty}\frac{{\rm vol}(\Delta(\Gamma_{\beta m}(\mathcal J(m))))}{m^d}
-\displaystyle\lim_{m\rightarrow \infty}\frac{{\rm vol}(\Delta(\Gamma_{\beta m}(\mathcal I(m))))}{m^d}\right)\\
&=&\lim_{m\rightarrow\infty}\frac{d!}{m^d}\left(\lim_{k\rightarrow\infty}
\frac{\ell_R(J(m)_k/I(m)_k)}{k^d}\right)\\
&=&\lim_{m\rightarrow\infty}\frac{G(\mathcal J(m),\mathcal I(m))}{m^d},
\end{array}
$$
completing the proof of Theorem \ref{MultFormula} in the case that $R$ is a complete domain.

Now assume that $R$ is only analytically unramified. We will complete the proof of Theorem \ref{MultFormula} by proving the theorem in this case. We may continue to assume that $R$ is complete, and hence reduced. Let $P_1,\ldots , P_r$ be the minimal primes of $R$ and let $R_i = R/P_i$ for $1\leq i\leq r$. Let $T= \bigoplus_{i=1}^rR_i$ and let $\phi: R\to T$ be the natural inclusion. By Artin-Rees, there exists a positive integer $\lambda$ such that
\begin{equation*}
    \omega_n:= \phi^{-1}(m_R^nT)= R\cap m_R^nT\subset m_R^{n-\lambda}
\end{equation*}for $n\geq \lambda$. Thus, $m_R^n\subset \omega_n\subset m_R^{n-\lambda}$ for $n\geq \lambda$. In particular,
\begin{equation*}
        m_R^{mk}\subset \omega_{mk}\subset m_R^{mk-\lambda}
    \end{equation*} for $m,k\in\mathbb{Z}_{>0}$ such that $mk\geq \lambda$. We have that
    \begin{equation*}
        \omega_n = \phi^{-1}\big(m_R^nR_1\bigoplus\cdots \bigoplus m_R^nR_r\big)
        = (m_R^n+P_1)\cap \cdots\cap (m_R^n+P_r).
    \end{equation*} Let $\beta = (\lambda+1)c$. We have that $\omega_{\beta n}\subset m_R^{c(\lambda +1)n-\lambda}\subset m_R^{cn}$ for $n\geq 1$, so that $\omega_{\beta mk}\subset m_R^{cmk}$ for $m$, $n\geq 1$. Thus,
    \begin{equation*}
        \omega_{\beta m k}\cap I(m)_k = \omega_{\beta mk}\cap (m_R^{cmk}\cap I(m)_k)
        = \omega_{\beta mk}\cap (m_R^{cmk}\cap J(m)_k)
        =\omega_{\beta m k}\cap J(m)_k
    \end{equation*}for $m$, $k\geq 1$. In particular $\omega_{\beta n}\cap I_n = \omega_{\beta n} \cap J_n$ for $n\geq 1$.
Consequently,
    \begin{equation}\label{unramifiedeq1}
        \ell_R(J_n/I_n)
        = \ell_R(J_n/\omega_{\beta n}\cap J_n)
        -\ell_R(I_n/\omega_{\beta n}\cap I_n)
    \end{equation}for $n\geq 1$ and
    \begin{equation}\label{unramifiedeq2}
        \ell_R(J(m)_k/I(m)_k)
        = \ell_R(J(m)_k/\omega_{\beta mk}\cap J(m)_k)
        - \ell_R(I(m)_k/\omega_{\beta mk}\cap I(m)_k)
    \end{equation}for $m$, $k\geq 1$.

    Define $E_n^0 = R$, and
    \begin{equation*}
        E_n^j= (m_R^{\beta n}+P_1)\cap \cdots\cap (m_R^{\beta n}+P_j)
    \end{equation*}for $1\leq j\leq r$ and $n\geq 1$. Let    \begin{equation*}
        L_0^j= F_0^j = \beta_{m,0}^j=\alpha_{m,0}^j = R.
    \end{equation*}for $0\leq j\leq r$ and $m\geq 1$. Let $L_n^j = E_n^j\cap J_n$, $F_n^j = E_n^j\cap I_n$, $\beta_{m,k}^j = E_{mk}^j\cap J(m)_k$, and $\alpha_{m,k}^j = E_{mk}^j\cap I(m)_k$ for $0\leq j\leq r$ and $m,n,k\geq 1$. Note that $\beta_{m,k}^r = E_{mk}^r\cap J(m)_k = \omega_{\beta m k}\cap J(m)_k$, $\alpha_{m,k}^r = \omega_{\beta m k}\cap I(m)_k$, $L_n^r = \omega_{\beta n}\cap J_n$, and $F_n^r = \omega_{\beta n}\cap I_n$. We have isomorphisms
    \begin{equation}\label{epsmult3-1}
        L_n^j/L_n^{j+1}=L_n^j/\big(m_R^{\beta n}+P_{j+1}\big)\cap L_n^j
        \cong L_n^jR_{j+1}/(L_n^jR_{j+1})\cap m_{R_{j+1}}^{\beta n}.
    \end{equation}for $0\leq j\leq r-1$. Similarly,
    \begin{equation}
    \begin{split}\label{unramifiedeq3}
        F_n^j/F_n^{j+1}&\cong F_n^jR_{j+1}/(F_n^jR_{j+1})\cap m_{R_{j+1}}^{\beta n},\\
        \beta_{m,k}^j/\beta_{m,k}^{j+1}&\cong \beta_{m,k}^jR_{j+1}/((\beta_{m,k}^jR_{j+1})\cap m_{R_{j+1}}^{\beta mk},\\
        \alpha_{m,k}^j/\alpha_{m,k}^{j+1}&\cong \alpha_{m,k}^jR_{j+1}/((\alpha_{m,k}^jR_{j+1})\cap m_{R_{j+1}}^{\beta mk}.
    \end{split}
    \end{equation}
    We have the following equations:
    \begin{equation*}
    \begin{split}
    \ell_R(J_n/\omega_{\beta n}\cap J_n) &= \sum_{j=0}^{r-1}\ell_R(L_n^j/L_n^{j+1}),\\
        \ell_R(I_n/\omega_{\beta n}\cap I_n) &= \sum_{j=0}^{r-1}\ell_R(F_n^j/F_n^{j+1}),\\
        \ell_R(J(m)_k/ \omega_{\beta mk}\cap J(m)_k)
        &= \sum_{j=0}^{r-1}\ell_R(\beta_{m,k}^j/\beta_{m,k}^{j+1}),\\
        \ell_R(I(m)_k/ \omega_{\beta mk}\cap I(m)_k)
        &= \sum_{j=0}^{r-1}\ell_R(\alpha_{m,k}^j/\alpha_{m,k}^{j+1}).
    \end{split}
    \end{equation*}

    Then by (\ref{unramifiedeq1}) and (\ref{epsmult3-1}), we have that
    \begin{equation}\label{unramifiedeq4}
    \begin{split}
        &\ell_R\big(J_n/I_n\big) = \sum_{j=0}^{r-1}\Big( \ell_R\Big(L_n^j/L_n^{j+1}\Big) - \ell_R\Big(F_n^j/F_n^{j+1}\Big)\Big)
        \\&=\sum_{j=0}^{r-1}\Big(\ell_{R_{j+1}}\Big(L_n^jR_{j+1}\Big)/\Big(\Big(L_n^jR_{j+1}\Big)\cap m_{R_{j+1}}^{\beta n}\Big)
        - \ell_{R_{j+1}}\Big(F_n^jR_{j+1}/\Big(\Big(F_n^jR_{j+1}\Big)\cap m_{R_{j+1}}^{\beta n}\Big)\Big)
    \end{split}
    \end{equation}
    for $n\geq 1$, and (\ref{unramifiedeq2}) and (\ref{unramifiedeq3}) now yields that
    \begin{equation}\label{unramifiedeq5}
    \begin{split}
        &\ell_R\big(J(m)_k/I(m)_k\big) = \sum_{j=0}^{r-1}\Big( \ell_R\Big(\beta_{m,k}^j/\beta_{m,k}^{j+1}\Big) - \ell_R\Big(\alpha_{m,k}^j/\alpha_{m,k}^{j+1}\Big)\Big)\\
        &=\sum_{j=0}^{r-1}\Big(\ell_{R_{j+1}}\Big(\beta_{m,k}^jR_{j+1}/\Big(\Big(\beta_{m,k}^jR_{j+1}\Big)\cap m_{R_{j+1}}^{\beta m k}\Big)
        - \Big(\ell_{R_{j+1}}\Big(\alpha_{m,k}^jR_{j+1}/\Big(\Big(\alpha_{m,k}^jR_{j+1}\Big)\cap m_{R_{j+1}}^{\beta m k}\Big)\Big)
    \end{split}
    \end{equation}for $m$, $k\geq 1$.\\

    Fix $j\in\{0,\ldots,r-1\}$ and let $\overline{R}= R_{j+1}$. Take $\overline{J}_n = L_n^j\overline{R}$, $\overline{I}_n = \overline{J}_n\cap m_{\overline{R}}^{\beta n}$, $\overline{J}(m)_k = \beta_{m,k}^j\overline{R}$, and $\overline{I}(m)_k = \overline{J}(m)_k\cap m_{\overline{R}}^{\beta m k}$. Then $\overline{I}_n\subset \overline{J}_n$ and $\overline{I}(m)_k\subset \overline{J}(m)_k$ for all $n$, $m$, and $k$. Since $R$ is complete, we have that $\overline{R} = R/P_{j+1}$ is a complete domain.\\
    
    Fix $m\geq 1$. We will prove that the graded families $\overline{\mathcal{I}}(m):=\{\overline{I}(m)_k\}$,\\ $\overline{\mathcal{J}}(m):= \{\overline{J}(m)_k\},\textbf{ } \overline{\mathcal{I}}:=\{\overline{I}_n\}$, and $\overline{\mathcal{J}}:= \{\overline{J}_n\}$ in $\overline{R}$ satisfy the hypothesis of Theorem \ref{MultFormula}. Fixing $k\geq 1$, we have that
    \begin{equation*}
    \begin{split}
        \overline{J}(m)_k &= (E_{mk}^j\cap J(m)_k)\overline{R} \subset (E_{mk}^j\cap J_{mk})\overline{R} = L_{mk}^j\overline{R} = \overline{J}_{mk},\\
        \overline{I}(m)_k &= (E_{mk}^j\cap J(m)_k)\overline{R} \cap m_{\overline{R}}^{\beta mk}
        \subset (E_{mk}^j\cap J_{mk})\overline{R} \cap m_{\overline{R}}^{\beta mk}
        \\&=L_{mk}^j\overline{R} \cap m_{\overline{R}}^{\beta m k} = \overline{I}_{mk},\\
        \overline{J}(m)_1 &= (E_{m}^j\cap J(m)_1)\overline{R} = (E_{m}^j\cap J_m)\overline{R} = \overline{J}_m,\\
        \overline{I}(m)_1 &= (E_{m}^j\cap J(m)_1)\overline{R} \cap m_{\overline{R}}^{\beta m}
        = \overline{J}_m\cap m_{\overline{R}}^{\beta m} = \overline{I}_m.
    \end{split}
    \end{equation*}
    By definition, $\overline{J}(m)_k\cap m_{\overline{R}}^{\beta mk} = \overline{I}(m)_k$ for $m,k\geq 1$. Thus,
    \begin{equation}\label{unramifiedeq6}
    \begin{split}
        \overline{I}(m)_k\cap m_{\overline{R}}^{\beta mk} 
        =\overline{J}(m)_k\cap m_{\overline{R}}^{\beta mk}.
    \end{split}
    \end{equation}

We will now establish that 
    
        \begin{equation}\label{Neq1}
    \lim_{m\to\infty}\lim_{k\to\infty}
    \frac{\ell_{\overline{R}}\big(\overline{J}(m)_k/\overline{I}(m)_k\big)}{m^dk^d}
    =\lim_{n\to\infty}\frac{\ell_{\overline{R}}\big(\overline{J}_n/\overline{I}_n\big)}{n^d}.
    \end{equation}
     
         Fix $m\in \mathbb{Z}_{>0}$. Let $c'=\beta m$. By (\ref{unramifiedeq6}) we have that $\overline{I}(m)_k\cap m_{\overline{R}}^{c'k} = \overline{J}(m)_k\cap m_{\overline{R}}^{c'k}$ for all $k$. Since $\overline{J}(i)_1 = \overline{J}_i$ and $\overline{I}(i)_1=\overline{I}_i$ for all $i$, (\ref{unramifiedeq6}) also yields that $\overline{J}_i\cap m_{\overline{R}}^{\beta i} = \overline{I}_i\cap m_{\overline{R}}^{\beta i}$ for all $i$. Suppose that $\dim \overline{R}<d$. Then by Lemma \ref{DimlessthandLemma}, we have that $\lim_{n\to\infty}\ell_{\overline{R}}(\overline{J}_n/\overline{I}_n)/n^d=0$ and $\lim_{k\to\infty}\ell_{\overline{R}}(\overline{J}(m)_k/\overline{I}(m)_k)/k^d$ $=0$ for all $m$, and we have that
    \begin{equation*}
        \lim_{m\to\infty}\lim_{k\to\infty}
    \frac{\ell_{\overline{R}}\big(\overline{J}(m)_k/\overline{I}(m)_k\big)}{m^dk^d}
    =0=\lim_{n\to\infty}\frac{\ell_{\overline{R}}\big(\overline{J}_n/\overline{I}_n\big)}{n^d}.
    \end{equation*}

    Now assume that $\dim \overline{R}=d$. We will show that if $\overline{I}_1=0$, then $\overline{I}_n=0$ for all $n$. Suppose that $\overline{I}_1=0$. Then $\overline{J}_1\neq \overline{R}$ (otherwise $0= \overline{J}_1\cap m_{\overline{R}}^{\beta}= m_{\overline{R}}^{\beta}$, while $\dim \overline{R} =d>0$). Consequently, $\overline{J}_1^{\beta}\subset \overline{J}_1\cap m_{\overline{R}}^{\beta}=0$, so that $(L_1^j+P_{j+1})/P_{j+1}= \overline{J}_1=0$. Hence, $m_R^{\beta}\cap J_1\subset E_1^j\cap J_1=L_1^j\subset P_{j+1}$. We have $J_1\neq R$ (otherwise $m_R^{\beta}\subset P_{j+1}$, so that $m_R = P_{j+1}$, while $\dim R>0$ and $P_{j+1}$ is minimal). Consequently, $I_1\subset J_1\subset P_{j+1}$. Then by Remark \ref{RemarkMinimal}, $J_n\subset P_{j+1}$. Thus, $L_n^j=E_n^j\cap J_n\subset P_{j+1}$, so that $\overline{I}_n\subset \overline{J}_n = (L_n^j+P_{j+1})/P_{j+1}=0$.\\

    By the case of Theorem \ref{MultFormula} when $R$ is a ($d$-dimensional) complete domain (replacing $R$, $c$, $I_n$, $J_n$, $I(m)_k$, and $J(m)_k$ with $\overline{R}$, $\beta$, $\overline{I}_n$, $\overline{J}_n$, $\overline{I}(m)_k$, and $\overline{J}(m)_k$ respectively), we have that
    \begin{equation*}
    \lim_{m\to\infty}\lim_{k\to\infty}
    \frac{\ell_{\overline{R}}\big(\overline{J}(m)_k/\overline{I}(m)_k\big)}{m^dk^d}
    =\lim_{n\to\infty}\frac{\ell_{\overline{R}}\big(\overline{J}_n/\overline{I}_n\big)}{n^d}
    \end{equation*}as desired, establishing (\ref{Neq1}).

    Equation (\ref{Neq1}) can be restated as
    \begin{equation}\label{unramifiedeq7}
    \begin{split}
        \lim_{m\to\infty}\lim_{k\to\infty}&\frac{\ell_R((\beta^j_{m,k}R_{j+1})/((\beta^j_{m,k}R_{j+1})\cap m_{R_{j+1}}^{\beta mk}))}{m^dk^d}
        \\&=\lim_{n\to\infty}\frac{\ell_R((L_n^jR_{j+1})/((L_n^jR_{j+1})\cap m_{R_{j+1}}^{\beta n}))}{n^d}
    \end{split}
    \end{equation}for $0\leq j\leq r-1$. Adding the equations (\ref{unramifiedeq7}) over $j = 0,\ldots,r-1$ shows that
    \begin{equation}\label{unramifiedeq8}
    \begin{split}
        \lim_{m\to\infty}\lim_{k\to\infty}&
    \frac{\sum_{j=0}^{r-1}\ell_R((\beta_{m,k}^jR_{j+1})/(\beta_{m,k}^jR_{j+1})\cap m_{R_{j+1}}^{\beta m k}))}{m^dk^d}
    \\&=\lim_{n\to\infty}\frac{\sum_{j=0}^{r-1}\ell_R((L_n^jR_{j+1})/(L_n^jR_{j+1}\cap m_{R_{j+1}}^{\beta n}))}{n^d}.
    \end{split}
    \end{equation}Applying the above argument proving equation (\ref{unramifiedeq8}) to $\overline{J}_n = F_n^j\overline{R}$, $\overline{I}_n = \overline{J}_n\cap m_{\overline{R}}^{\beta n}$, $\overline{J}(m)_k = \alpha_{m,k}^j\overline{R}$ and $\overline{I}(m)_k=\overline{J}(m)_k\cap m_{\overline{R}}^{\beta m k}$, we obtain
    \begin{equation}\label{unramifiedeq9}
    \begin{split}
        \lim_{m\to\infty}\lim_{k\to\infty}&
    \frac{\sum_{j=0}^{r-1}\ell_R((\alpha_{m,k}^jR_{j+1})/(\alpha_{m,k}^jR_{j+1})\cap m_{R_{j+1}}^{\beta m k}))}{m^dk^d}
    \\&=\lim_{n\to\infty}\frac{\sum_{j=0}^{r-1}\ell_R((F_n^jR_{j+1})/(F_n^jR_{j+1}\cap m_{R_{j+1}}^{\beta n}))}{n^d}.
    \end{split}
    \end{equation}Subtracting (\ref{unramifiedeq9}) from (\ref{unramifiedeq8}), and then applying (\ref{unramifiedeq4}) and (\ref{unramifiedeq5}) gives that

\begin{equation}\label{unramifiedeq10}
    \lim_{m\to\infty}\lim_{k\to\infty}\frac{\ell_R(J(m)_k/I(m)_k)}{m^dk^d}
    =\lim_{n\to\infty}\frac{\ell_R(J_n/I_n)}{n^d}.
\end{equation}Multiplying both sides of (\ref{unramifiedeq10}) by d! gives that
\begin{equation*}
    \lim_{m\to\infty}\frac{G(\mathcal{I}(m),\mathcal{J}(m))}{m^d}=\mathcal{F}(\mathcal{I},\mathcal{J}).
\end{equation*}This completes the proof of Theorem \ref{MultFormula}.

\section{Epsilon multiplicity as a limit of Amao multiplicities}\label{SecEM}

In this section, we deduce Theorem \ref{TheoremA} from Theorem \ref{MultFormula}.

\begin{Lemma}\label{EpsTheorem3} Let $R$ be any Noetherian local ring. For $i\in\mathbb{Z}_{>0}$, and $I\subset R$ any ideal, we have
    \begin{equation*}
        (I^{\text{sat}})^i\subset (I^i)^{\text{sat}}.
    \end{equation*}
\end{Lemma}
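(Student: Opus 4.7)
The plan is to verify the inclusion on generators of $(I^{\mathrm{sat}})^i$ and then extend to arbitrary elements by linearity. A general element of $(I^{\mathrm{sat}})^i$ is an $R$-linear combination of products of the form $a_1 a_2 \cdots a_i$ with each $a_j \in I^{\mathrm{sat}}$. Since $(I^i)^{\mathrm{sat}}$ is an ideal, it suffices to show that each such product lies in $(I^i)^{\mathrm{sat}}$.

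First I would recall that $I^{\mathrm{sat}} = \bigcup_{k \ge 0}(I : m_R^k)$, so for each $a_j$ there exists $k_j \in \mathbb{N}$ with $m_R^{k_j} a_j \subset I$. Setting $K = k_1 + \cdots + k_i$, I would then use the fact that $m_R^K = m_R^{k_1} m_R^{k_2} \cdots m_R^{k_i}$ as ideals to compute
$$
m_R^{K} \cdot (a_1 a_2 \cdots a_i) = (m_R^{k_1} a_1)(m_R^{k_2} a_2)\cdots(m_R^{k_i} a_i) \subset I \cdot I \cdots I = I^i.
$$
Hence $a_1 \cdots a_i \in (I^i : m_R^K) \subset (I^i)^{\mathrm{sat}}$, which is the desired containment on generators.

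Finally I would conclude: an arbitrary element of $(I^{\mathrm{sat}})^i$ is a finite $R$-linear combination of such products, and $(I^i)^{\mathrm{sat}}$ is an ideal, so the sum lies in $(I^i)^{\mathrm{sat}}$ as well.

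There is essentially no main obstacle here; the statement is a direct consequence of the definition of saturation together with the distributivity of ideal multiplication over the maximal ideal powers. The only tiny subtlety to keep in mind is to choose the saturating exponents $k_j$ individually for each factor and to sum them, rather than trying to use a single uniform $k$ (which need not exist for the ideal $I^{\mathrm{sat}}$ itself a priori, though of course it would in the Noetherian setting).
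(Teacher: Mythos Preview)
Your proof is correct and follows essentially the same approach as the paper: both arguments reduce to the observation that $(I:m_R^{k})^i\subset I^i:m_R^{ik}$, hence into $(I^i)^{\mathrm{sat}}$. The only cosmetic difference is that the paper invokes the Noetherian hypothesis to fix a single $t$ with $I^{\mathrm{sat}}=I:m_R^{t}$ and $(I^i)^{\mathrm{sat}}=I^i:m_R^{t}$, whereas you work elementwise with individual exponents $k_j$ and sum them; as you yourself note at the end, these are equivalent in the Noetherian setting.
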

\begin{proof}
    Fix $i\geq 1$. There exists $t\in\mathbb{Z}_{>0}$ such that for $n\geq t$, we have
    \begin{equation*}
    \begin{split}
        (I^i)^{\text{sat}} &= I^i:m_R^n\\
        I^{\text{sat}} &= I:m_R^n.
    \end{split}
    \end{equation*}Consequently,
    \begin{equation*}
        (I^{\text{sat}})^i = (I:m_R^t)^i\subset
        I^i:m_R^{it} = I^i:m_R^t = (I^i)^{\text{sat}}.
    \end{equation*}
\end{proof}

\begin{Lemma}\label{EpsTheorem4}
    Let $R$ be a Noetherian local ring and $I\subset R$ an ideal. Then there exists $c\in\mathbb{Z}_{>0}$ such that for $m,k\geq 1$, we have
    \begin{equation*}
        I^{mk}\cap m_R^{cmk}
        = [(I^m)^{\text{sat}}]^k\cap m_R^{cmk}.
    \end{equation*}
\end{Lemma}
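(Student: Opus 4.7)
The plan is to verify the easy containment trivially and then reduce the reverse containment to a uniform Artin--Rees-type bound for the saturations of the powers of $I$.

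The containment $I^{mk}\cap m_R^{cmk}\subset [(I^m)^{\text{sat}}]^k\cap m_R^{cmk}$ is immediate regardless of the value of $c$, since $I^{mk}=(I^m)^k\subset [(I^m)^{\text{sat}}]^k$. For the reverse inclusion, Lemma \ref{EpsTheorem3} supplies $[(I^m)^{\text{sat}}]^k\subset (I^{mk})^{\text{sat}}$, so it is enough to exhibit a single constant $c$, depending only on $R$ and $I$, for which
$$(I^n)^{\text{sat}}\cap m_R^{cn}\subset I^n \qquad \text{for every } n\geq 1,$$
and then specialize this inclusion to $n=mk$.

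To obtain this uniform bound I would invoke Swanson's theorem on the linear growth of primary decompositions of powers of an ideal: in a Noetherian ring $R$, for any ideal $I$ there exists a constant $c$ such that for every $n\geq 1$ the ideal $I^n$ admits a primary decomposition $I^n=Q_0^{(n)}\cap Q_1^{(n)}\cap\cdots\cap Q_{r_n}^{(n)}$ whose components satisfy $(\sqrt{Q_i^{(n)}}\,)^{cn}\subset Q_i^{(n)}$. Isolating the (possibly empty) $m_R$-primary component $Q_0^{(n)}$, one has $m_R^{cn}\subset Q_0^{(n)}$, while $(I^n)^{\text{sat}}$ coincides with the intersection of those components whose radicals are properly contained in $m_R$. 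Hence
$$(I^n)^{\text{sat}}\cap m_R^{cn}\subset (I^n)^{\text{sat}}\cap Q_0^{(n)}=I^n,$$
and the case in which no $m_R$-primary component appears is trivial because then $I^n=(I^n)^{\text{sat}}$.

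The main obstacle is securing the uniformity of $c$ in $n$, so that one constant simultaneously controls all pairs $(m,k)$; this is precisely the content of Swanson's linear growth theorem, so once that result is cited the remainder of the argument is a bookkeeping manipulation with the primary decomposition of $I^n$.
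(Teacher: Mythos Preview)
Your proposal is correct and follows essentially the same route as the paper: both arguments sandwich $[(I^m)^{\text{sat}}]^k$ between $I^{mk}$ and $(I^{mk})^{\text{sat}}$ via Lemma~\ref{EpsTheorem3}, and both invoke Swanson's linear-growth-of-primary-decompositions theorem to obtain the uniform bound $m_R^{cn}\cap (I^n)^{\text{sat}}\subset I^n$. Your explicit treatment of the case where $m_R$ is not an associated prime of $I^n$ is a nice touch the paper glosses over.
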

\begin{proof}
By Theorem 3.4 \cite{S1}, there exists $c\in \mathbb{Z}_{>0}$ such that each power $I^n$ of $I$ has an irredundant primary decomposition
\begin{equation*}
    I^n = q_1(n)\cap \cdots \cap q_t(n)
\end{equation*} where $q_1(n)$ is $m_R-$primary and $m_R^{nc}\subset q_1(n)$ for all $n$. Then 
$$
(I^n)^{\text{sat}}=q_2(n)\cap\cdots\cap q_t(n),
$$
and consequently
\begin{equation*}
    I^n\cap m_R^{cn} = m_R^{cn}\cap q_2(n)\cap\cdots\cap q_t(n) = m_R^{cn}\cap (I^n)^{\text{sat}}.
\end{equation*}
Then for $m,k\in\mathbb{Z}_{>0}$,
    \begin{equation*}
        I^{mk}\cap m_R^{cmk} = (I^{mk})^{\text{sat}}\cap m_R^{cmk}.
    \end{equation*}By Lemma \ref{EpsTheorem3}, we now have
    \begin{equation*}
        I^{mk}\cap m_R^{cmk}
        \subset [(I^m)^{\text{sat}}]^k\cap m_R^{cmk}
        \subset (I^{mk})^{\text{sat}}\cap m_R^{cmk}
        = I^{mk}\cap m_R^{cmk}
    \end{equation*}for $m,k\geq 1$. This completes the proof of Lemma \ref{EpsTheorem4}.
\end{proof}

We are now ready to prove Theorem \ref{TheoremA}. The case when $d=0$ is immediate since an analytically unramified local ring of dimension zero is a field.\\

Now we prove the theorem when $d>0$. Define ideals of $R$ for $n,m,k\geq 1$
    \begin{equation*}
    \begin{split}
        J_n&:=(I^n)^{\text{sat}},\\
        I_n&:= I^n\subset J_n,\\
        J(m)_k&:= [(I^m)^{\text{sat}}]^k,\\
        I(m)_k&:= I^{mk} = I_{mk}.
    \end{split}
    \end{equation*}We have $I(m)_k=I^{mk}\subset [(I^m)^{\text{sat}}]^k=J(m)_k$. Additionally, $J(m)_k = [(I^m)^{\text{sat}}]^k\subset (I^{mk})^{\text{sat}}=J_{mk}$ by Lemma \ref{EpsTheorem3}. By the definitions, $J(m)_1 = J_m$ and $I(m)_1 = I_m$ for $m\geq 1$. For $m\geq 1$, $\mathcal{I}:=\{I_n\}$, $\mathcal J=\{J_m\}$,
    $\mathcal{I}(m):= \{I(m)_k\}$, and $\mathcal{J}(m):=\{J(m)_k\}$ are all graded families of ideals in $R$.  By Lemma \ref{EpsTheorem4}, we may find $c\in\mathbb{Z}_{>0}$ such that
    \begin{equation*}
        I(m)_k\cap m_R^{cmk} = J(m)_k\cap m_R^{cmk}
    \end{equation*}for $m,k\geq 1$. Suppose that $P$ is a minimal prime of $R$ such that $I_1\subset P$. Then for $i\geq 1$, $I_i=I^i\subset I= I_1\subset P$. Hence the graded families $\mathcal{I}$, $\mathcal{J}$, $\mathcal{I}(m)$, and $ \mathcal{J}(m)$ satisfy the hypothesis of Theorem \ref{MultFormula}. Thus the conclusions of Theorem \ref{TheoremA}  hold by Theorem \ref{MultFormula}.

\end{document}